\newcommand{\N}{{\mathbb N}}
\newcommand{\M}{{\mathcal M}}
\newcommand{\R}{{\mathbb R}}
\newcommand{\LL}{{\mathcal L}}
\newtheorem{theorem}{Theorem}
\newtheorem{corollary}{Corollary}
\DeclareMathOperator{\var}{var}
\DeclareMathOperator{\diam}{diam}
\newtheorem{lemma}{Lemma}
\newtheorem{remark}{Remark}
\author{GUAN-ZHONG MA$^1$,\quad YAO XIAO$^{2*}$\\
{\small\em $^1$, Department of Mathematical Sciences, Tsinghua University, Beijing 100084, China}\\
{\small\em $^2$, Department of Mathematical Scineces, Tsinghua University, Beijing 100084, China}\\
{\small\em mgz09@mails.tsinghua.edu.cn, yaox11@mails.tsinghua.edu.cn}
}
\title{Multifractal analysis of dimension spectrum and the set of irregular points  in non-uniformly hyperbolic systems}
\date{}
\begin{document}
\maketitle
\begin{abstract}
We study the  multifractal analysis of dimension spectrum for almost additive potential in a class of one dimensional
non-uniformly hyperbolic dynamic systems and prove that the  irregular set has full Hausdroff dimension.
\end{abstract}
\smallskip

Key words:  multifractal analysis; non-uniformly hyperbolic; measure concatenation

Mathematics Subject Classification:37B40; 28A80

\section{Introduction}
 Given a compact metric space $X$, and $T$ a continuous transformation from $X$ to itself, we call the pairs  $(X,T)$ a topological dynamical systems.
A sequence $\Phi=(\phi_n)_{n=1}^\infty$ is said to be almost additive if every $\phi_n$ is continuous
from $X$ to $\mathbb{R}$ and there is a positive constant $C(\Phi)>0$ such that
$$-C(\Phi)+\phi_n(x)+\phi_p(T^n x)\leq \phi_{n+p}(x)\leq \phi_n(x)+\phi_p(T^n x)+C(\Phi),\ \forall n,p\in \mathbb{N},\forall x\in X.$$
We denote by $C_{aa}(X,T)$ the collection of almost additive potentials on $X$. The almost additive potential arise naturally in the study of non-conformal repellers \cite{BD2009} and topological pressure of product of positive matrices \cite{FengLau} .

If $\Phi=(\Phi^1,\cdots,\Phi^d)$ and  $\Phi^j\in C_{aa}(X,T)$ for each $j$,  we call $\Phi$ a  vector-valued
almost additive potential and write $\Phi\in \mathcal{C}_{aa}(X,T,d)$. For $\Phi\in \mathcal{C}_{aa}(X,T,d)$, we have $\Phi=(\phi_n)_{n=1}^\infty$ with
$\phi_n=(\phi_n^1,\cdots,\phi_n^d)$.

Given any $\Phi\in \mathcal{C}_{aa}(X,T,d)$, by subadditivity
we have $\Phi_*(\mu):=\lim\limits_{n\rightarrow\infty}\int_X \frac{\phi_n}{n}d\mu$ exists for every $\mu\in \mathcal{M}(X,T)$. We define the  set
$\mathcal{L}_{\Phi}=\{\Phi_*(\mu):\mu\in \mathcal{M}(X,T)\}$, which is compact and convex. Given $\Phi\in \mathcal{C}_{aa}(X,T,d)$ and $\alpha\in \mathbb{R}^d$, one can define the level set as $X_{\alpha}:=\{x\in X:\lim\limits_{n\rightarrow\infty}\frac{\phi_n(x)}{n}=\alpha\}$. It is well known that if $(X,T)$ satisfies specification condition,
then $X_{\alpha}\neq \emptyset$ if and only if $\alpha\in \mathcal{L}_{\Phi}$.
Roughly speaking, the level sets $X_{\alpha}$ forms multifractal decomposition and the map $\alpha\rightarrow \dim_{H} X_{\alpha}$ forms a multifractal spectrum.  We also define the set $X_{irr}=\left\{x\in X:\ \lim\limits_{n\rightarrow\infty} \frac{\Phi_n(x)}{n} \text{does not exist}\right\}$.

The theory of multifractal analysis for uniformly hyperbolic conformal dynamic system
is well developed in the aspects of entropy spectrum and Birkhoff spectrum and local dimension of Gibbs measure \cite{FFW,PW1997A,GP1997, PW1997B}. In the case of sub-shift of finite type, the multifractal analysis for the level sets of almost additive potential or quotient almost additve potential has been well understood \cite{BD2009,BQ}. However there is still not a complete picture for the multifractal analysis of non-uniform hyperbolic dynamic systems. In the recent years, people become more and more interested in the multifractal analysis of non-uniform hyperbolic dynamic systems \cite{GR2009, JJOP2010}.   In this note we proved that irregular set in non-uniform hyperbolic dynamic system carries full of Hausdroff dimension unless it is an empty set. The corresponding part in uniform hyperbolic dynamic systems was proved in \cite{BS2000,FFW}.  

We start with an introduction about the basic settings.
Let $T:\bigcup_{i=1}^m I_i\rightarrow [0,1]$ be a piecewise $C^{1}$ map satisfies the following condition:
\begin{itemize}
\item $I_i\subset [0,1], i=1,\cdots,m$  such that $I_{i}$ and $I_{j}$ does not overlap  for $i\ne j.$
\item $T|_{ I_j}:I_j\rightarrow [0,1]$ is onto and $C^{1}$ map, for all $1\leq j\leq m.$ There is a unique $x_j\in I_j$ such that $T(x_j)=x_j.$
\item   $T'(x)>1$ for $x\not\in\{x_1,\cdots, x_m\}$.
\end{itemize}
We remark that since the map $T$ is $C^1$, we have $T^\prime(x_j)\ge 1$ for $j=1,\cdots,m.$ If for some $j$,
   $T'(x_j)=1$, we call   $x_j$ a {\it parabolic} fixed point.

Define the attractor of $T$ as
$$
  \Lambda=\left\{x\in \bigcup_{j=1}^m I_j | T^{n}(x)\in [0,1], \forall n\geq0\right\}.
$$
It is known  that $\Lambda$ is  invariant under $T$ and we get a  dynamic system
$T: \Lambda \rightarrow \Lambda.$

This special class of non-uniform hyperbolic maps includes the famous example of Manneville-Pomeu map and Farey map \cite{PW1999}.

The above system has a symbolic coding which can be defined as follows.
Let  $T_{i}$ be the inverse map of $T|_{I_{i}}: I_{i}\rightarrow [0,1]$ for $i=1,\cdots,m$.
  Let $\mathcal{A}=\{1\dots, m\}$ and $\Sigma=\mathcal{A}^{\mathbb{N}}$. There is a shift map $\sigma:\Sigma\to\Sigma$ defined by $\sigma((\omega_n)_{n\ge 1})=(\omega_n)_{n\ge 2}$. Define a projection $\Pi: \Sigma \to [0,1]$ as
\[
\Pi(\omega)=\lim_{n\rightarrow\infty}T_{\omega_1}\circ T_{\omega_2}\circ \dots \circ T_{\omega_n}([0,1]).
\]
Then $\Pi(\Sigma)=\Lambda$ and moreover

$$
 \Pi\circ\sigma(\omega)=T\circ\Pi(\omega).
$$

Obviously, we have that $\Pi$ is a bijection except for at most countable points.

 In this paper, we concern with $\Lambda_{\alpha}$ and $X_{\alpha}$ respectively for
$\Phi\in \mathcal{C}_{aa}(\Lambda,T,d)$ and $\Psi\in \mathcal{C}_{aa}(\Sigma,\sigma,d)$.
 Two kinds of level set are related in the following way.  Given $\Phi\in \mathcal{C}_{aa}(\Lambda,T,d)$. Define $\Psi=\Phi\circ \Pi$, then $\Psi\in C_{aa}(\Sigma,T,d)$ and $\Pi(X_\alpha)=\Lambda_\alpha$.

Define  $g(\omega):=-\log T'_{\omega_1}\Pi(\sigma \omega)$ and let
$$
\tilde{\Sigma}=\left\{\omega\in\Sigma:\liminf\limits_{n\rightarrow \infty }\frac{1}{n}\sum_{j=0}^{n-1}g(\sigma^j\omega)>0\right\}.
$$
Let $h(\mu,\sigma)$, $\lambda(\mu,\sigma)$  be  the metrical entropy and Lyapunov exponent of $\mu$.  We have the following  theorem:
\begin{theorem}\cite{MaYao2013}\label{main-1}
Given $\Phi\in \mathcal{C}_{aa}(\Sigma,T,d)$, then for $\alpha\in \LL_{\Phi}$,
$$\dim_{\text{H}}\Pi(X_{\alpha}\cap \tilde{\Sigma})= \underset{\mu\in \mathcal{M}(\Sigma,\sigma)}{\sup}\left\{\;\frac{h(\mu,\sigma)}{\lambda(\mu,\sigma)}\;\left|\;\lim_{n\rightarrow\infty}\int\frac{\phi_{n}}{n}  d \mu=\alpha, \lambda(\mu,\sigma)>0\right.\right\}.
$$
\end{theorem}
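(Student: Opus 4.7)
The plan is to establish matching upper and lower bounds for $\dim_H\Pi(X_\alpha\cap\tilde\Sigma)$ against the variational expression on the right-hand side. Throughout, the geometry is controlled via the bounded distortion identity $|T_{\omega_1}\circ\cdots\circ T_{\omega_n}([0,1])|\asymp\exp(-S_n g(\omega))$, where $S_n g(\omega)=\sum_{j=0}^{n-1}g(\sigma^j\omega)$, and the fact that on $\tilde\Sigma$ this Birkhoff sum grows linearly. Note that the map $\Pi$ is bijective off a countable set, so we may transfer dimension questions between $\Sigma$ and $\Lambda$ freely.

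For the upper bound, I would fix $\alpha\in\LL_\Phi$ and, for each pair $\varepsilon>0$ and integer $k\ge 1$, partition $X_\alpha\cap\tilde\Sigma$ according to the values of $\frac{1}{n}S_n g(\omega)$ and $\frac{1}{n}\phi_n(\omega)$ at scale $1/k$. On each piece the cylinders $[\omega_1,\dots,\omega_n]$ project to intervals of diameter close to $e^{-n\lambda}$ for some $\lambda>0$, and the number of cylinders of generation $n$ needed to cover a point with prescribed Birkhoff averages can be controlled by an entropy-type quantity coming from a weighted topological pressure for the almost additive potential $\Phi$. Optimizing the Moran-type cover and sending $k\to\infty,\varepsilon\to 0$, one obtains that the dimension is at most $\sup\{h(\mu,\sigma)/\lambda(\mu,\sigma)\}$ over the prescribed measures; the almost additive version of Shannon--McMillan--Breiman and Kingman's subadditive ergodic theorem are the technical inputs here.

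For the lower bound, given any ergodic $\mu\in\M(\Sigma,\sigma)$ with $\Phi_*(\mu)=\alpha$ and $\lambda(\mu,\sigma)>0$, I would produce a Cantor-like subset of $X_\alpha\cap\tilde\Sigma$ on which a suitable Bernoulli-type measure, built from $\mu$, has local dimension arbitrarily close to $h(\mu,\sigma)/\lambda(\mu,\sigma)$; the conclusion then follows from the mass distribution principle (Billingsley lemma) applied to $\Pi_*$ of this measure. Because a single ergodic $\mu$ might live on a thin set, the construction is carried out using the \emph{measure concatenation} technique advertised in the keywords: one concatenates long generic blocks of $\mu$ together with short corrective blocks of reference hyperbolic measures to ensure simultaneously that (i) the Birkhoff averages of $\phi_n$ converge to $\alpha$, (ii) the averages of $g$ stay bounded away from $0$ so that the constructed orbits lie in $\tilde\Sigma$, and (iii) the entropy and Lyapunov exponent of the resulting measure are close to those of $\mu$. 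Almost additivity is used to show that concatenation affects $\int\phi_n/n\,d\mu$ only by a controlled error.

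The main obstacle is exactly this step of staying inside $\tilde\Sigma$ while approximating the variational supremum. Because of the parabolic fixed points, ergodic measures may have zero Lyapunov exponent and the symbolic system lacks uniform specification; hence the standard Katok/Bowen horseshoe construction for uniformly hyperbolic systems does not apply directly. The measure-concatenation argument must therefore be carried out with quantitative control on the excursions near the parabolic points, ensuring that the constructed Moran set is supported in $\tilde\Sigma$ with positive lower Lyapunov exponent, while preserving the almost additive Birkhoff limit equal to $\alpha$ up to vanishing error.
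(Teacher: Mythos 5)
Your two-pronged strategy---an upper bound by covering $X_\alpha\cap\tilde\Sigma$ with cylinders whose diameters are governed by $e^{-nA_ng}$ and counting them through the entropy of limit points of empirical measures, and a lower bound by concatenating long generic blocks of ergodic measures approximating $\mu$ into a Moran-type subset of $X_\alpha\cap\tilde\Sigma$ and applying the mass distribution principle to the projection of the associated product measure---is exactly the approach of \cite{MaYao2013}, which this paper only cites for Theorem~\ref{main-1} but whose lower-bound machinery (ergodic approximation via Lemma~\ref{basic}, continuity via Lemma~\ref{continuity}, the diameter estimate of Lemma~\ref{appro}, and the Bernoulli-type measure $\eta$ on the concatenated set) is reused verbatim in the proof of Lemma~\ref{deviation} here. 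You correctly identify the genuine difficulty (keeping the constructed orbits away from the parabolic excursions so that they remain in $\tilde\Sigma$, which the paper handles by deleting the constant blocks $j^{l_i}$ from each $\Sigma(i)$), so the proposal matches the paper's route.
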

If we take $d=1$ and $\phi_n=nc$ where $c$ is a real constant. We have the following result.
\begin{corollary}
Let $T:\Lambda\rightarrow \Lambda$ is non-uniform hyperbolic, we have
$$\dim_{\text{H}}\Pi( \tilde{\Sigma})= \underset{\mu\in \mathcal{M}(\Sigma,\sigma)}{\sup}\left\{\;\frac{h(\mu,\sigma)}{\lambda(\mu,\sigma)}\;\left|\, \lambda(\mu,\sigma)>0\right.\right\}.
$$
\end{corollary}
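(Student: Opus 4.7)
The plan is simply to specialize Theorem \ref{main-1} to the almost additive potential $\Phi=(\phi_n)_{n=1}^\infty$ given by $\phi_n\equiv nc$ for some fixed $c\in\R$, and to observe that in this degenerate situation the multifractal spectrum collapses onto the single value $\alpha=c$, where the corresponding level set is all of $\Sigma$.

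First I would verify that $\phi_n\equiv nc$ lies in $C_{aa}(\Sigma,\sigma)$: the almost-additivity inequality
$$-C(\Phi)+\phi_n(\omega)+\phi_p(\sigma^n\omega)\le\phi_{n+p}(\omega)\le\phi_n(\omega)+\phi_p(\sigma^n\omega)+C(\Phi)$$
holds with equality for $C(\Phi)=0$, so $\Phi$ is trivially almost additive. Next, since $\int\frac{\phi_n}{n}\,d\mu=c$ for every $\mu\in\mathcal{M}(\Sigma,\sigma)$, we have $\Phi_*(\mu)=c$ for all $\mu$, whence $\mathcal{L}_{\Phi}=\{c\}$; in particular $\alpha:=c\in\mathcal{L}_{\Phi}$ so Theorem \ref{main-1} is applicable at this $\alpha$.

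Now the level set $X_{c}=\{\omega\in\Sigma:\lim_{n\to\infty}\phi_n(\omega)/n=c\}$ equals the whole of $\Sigma$, because $\phi_n(\omega)/n\equiv c$ for every $\omega$. Consequently $\Pi(X_{c}\cap\tilde\Sigma)=\Pi(\tilde\Sigma)$. Moreover, on the right-hand side of Theorem \ref{main-1}, the constraint $\lim_{n\to\infty}\int\phi_n/n\,d\mu=\alpha$ is automatically satisfied by every invariant measure, so it drops out of the supremum. Substituting these two simplifications into Theorem \ref{main-1} yields exactly
$$\dim_{\mathrm H}\Pi(\tilde\Sigma)=\sup_{\mu\in\mathcal{M}(\Sigma,\sigma)}\left\{\frac{h(\mu,\sigma)}{\lambda(\mu,\sigma)}\;\middle|\;\lambda(\mu,\sigma)>0\right\},$$
which is the claimed identity. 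There is no real obstacle: the corollary is a direct specialization, and the only thing to check carefully is that the constant sequence is indeed an admissible almost additive potential and that its unique accumulation point $c$ belongs to $\mathcal{L}_\Phi$, both of which are immediate.
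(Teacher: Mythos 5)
Your proposal is correct and is exactly the paper's intended derivation: the authors obtain the corollary by taking $d=1$ and $\phi_n = nc$ in Theorem \ref{main-1}, so that the level set $X_c$ is all of $\Sigma$ and the constraint in the supremum becomes vacuous. You have simply spelled out the routine verifications (almost additivity of the constant potential and $\mathcal{L}_\Phi=\{c\}$) that the paper leaves implicit.
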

 Roughly speaking $\Pi\tilde{\Sigma}$  can be seen as the hyperbolic part of non-uniform hyperbolic attractor. Of course this corollary implies the following theorem in the uniform hyperbolic setting, for which  $\tilde{\Sigma}=\Sigma$. One has
\begin{theorem}\cite{BQ}
Assume that $T:\Lambda\rightarrow \Lambda$ is uniformly hyperbolic, then
$$\dim_{\text{H}}\Pi(\Sigma)= \underset{\mu\in \mathcal{M}(\Sigma,\sigma)}{\sup}\left\{\;\frac{h(\mu,\sigma)}{\lambda(\mu,\sigma)}\;\left|\, \lambda(\mu,\sigma)>0\right.\right\}.$$
\end{theorem}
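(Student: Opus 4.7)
The plan is to observe that this theorem is an immediate specialization of the preceding corollary, and so essentially everything reduces to verifying that the set $\tilde{\Sigma}$ coincides with all of $\Sigma$ in the uniformly hyperbolic regime. Since the corollary already supplies the formula $\dim_H \Pi(\tilde{\Sigma}) = \sup_\mu \{h(\mu,\sigma)/\lambda(\mu,\sigma) : \lambda(\mu,\sigma)>0\}$, once I know $\tilde{\Sigma}=\Sigma$ the theorem follows at once.

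To carry out the reduction, first I would unwind the definition of $g$. Since $T_{\omega_1}$ is the inverse branch of $T|_{I_{\omega_1}}$, the chain rule gives $T_{\omega_1}'(\Pi(\sigma\omega)) = 1/T'(\Pi(\omega))$, so
\[
 g(\omega) = -\log T_{\omega_1}'(\Pi(\sigma\omega)) = \log T'(\Pi(\omega)).
\]
Next I would translate the uniform hyperbolicity assumption into a quantitative estimate. Uniform hyperbolicity here means the parabolic case is excluded, i.e.\ $T'(x_j)>1$ for every fixed point $x_j$; combined with the earlier assumption $T'(x)>1$ off of $\{x_1,\dots,x_m\}$, continuity of $T'$ and compactness of $\Lambda=\overline{\Lambda}$ yield a constant $\kappa>1$ with $T'(x)\ge \kappa$ for all $x\in \Lambda$. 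Consequently $g(\omega)\ge \log\kappa>0$ uniformly on $\Sigma$, and therefore
\[
\liminf_{n\to\infty}\frac{1}{n}\sum_{j=0}^{n-1} g(\sigma^j\omega)\ \ge\ \log\kappa\ >\ 0
\]
for \emph{every} $\omega\in\Sigma$. This gives $\tilde{\Sigma}=\Sigma$, and the theorem follows by applying the corollary to the projection $\Pi(\tilde{\Sigma})=\Pi(\Sigma)$.

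There is no real obstacle, since the only step with any content is checking that the uniform hyperbolicity hypothesis supplies a uniform lower bound on $T'$; this is a standard compactness argument but is the one point that must be explicitly justified, because the paper's standing assumption on $T$ allows $T'(x_j)=1$ at fixed points (parabolic behavior) while the theorem rules this out. One might also wish to remark, for completeness, that $\Pi$ being a bijection off a countable set keeps Hausdorff dimensions of $\Pi(\Sigma)$ and $\Pi(\tilde{\Sigma})$ identical to those produced by the corollary, so no adjustment to the supremum formula is required when passing from $\tilde{\Sigma}$ to $\Sigma$.
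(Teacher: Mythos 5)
Your proposal is correct and follows exactly the route the paper itself indicates: the theorem is presented as an immediate consequence of the preceding corollary via the observation that $\tilde{\Sigma}=\Sigma$ in the uniformly hyperbolic case (the paper cites the original result to Barral--Qu and gives no further argument). Your additional verification that $g(\omega)=\log T'(\Pi(\omega))\ge\log\kappa>0$ by compactness is the right way to fill in the one detail the paper leaves implicit.
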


In the non-uniform hyperbolic dynamic systems, it is really subtle that whether the hyperbolic part of attractor $\Pi\tilde{\Sigma}$ carries the full hausdroff dimension of the attractor of $\Pi\Sigma$. This has been verified in \cite{Urbanski1996} for the case that each inverse branch of $T$ is $C^{1+\alpha}$  together with some geometric conditions.
In \cite{GR2009}, it was proved that $\dim_{H}\Pi\tilde{\Sigma}=\dim_{H}\Pi\Sigma$ under $C^{1+Lip}$ condition . However,  as pointed  in \cite{JJOP2010}, it is still unknown whether it is true for $C^{1}$ condition in non-uniform hyperbolic  dynamic systems. The following assumptions implied that $\dim_{H}\Pi\tilde{\Sigma}=\dim_{H}\Pi\Sigma$, which was  first proposed in \cite{JJOP2010}.

\noindent {\bf Assumptions:}
For any $\epsilon>0$, there exists $\nu\in\M(\Lambda,T)$ such that $\lambda(\nu,T)>0$ and
$\frac{h(\nu,T)}{\lambda(\nu,T)}>\dim_{\text{H}}\Lambda-\epsilon$.

Consider the system $T: \Lambda\to\Lambda.$
Let $I\subset \{x_1,\cdots, x_m\}$ be the set of parabolic fixed points. Given $\Phi\in \mathcal{C}_{aa}(\Lambda,T,d)$, we define $A=\text{Co}\left\{\lim\limits_{n\rightarrow\infty}\frac{\phi_n(x)}{n}:x\in\mathcal{I}\right\}$, which is  the {\it convex hull} of
$\left\{\lim\limits_{n\rightarrow\infty}\frac{\phi_n(x)}{n}:x\in\mathcal{I}\right\}$.

\begin{theorem}\cite{MaYao2013}\label{main-3}
 Let$(\Lambda,T)$ be a system defined as above. Given $\Phi\in \mathcal{C}_{aa}(\Lambda,T,d)$ and define $A$  as above. Under the assumption above,  then  for any $\alpha\in \LL_\Phi \setminus A$, we have
$$
\dim_{\text{H}}\Lambda_{\alpha}= \underset{\mu\in \mathcal{M}(\Lambda,T)}{\sup}\left\{\frac{h(\mu,T)}{\lambda(\mu,T)}\left|\Phi_*(\mu)=\alpha\right. \right\},
$$
and   for all $\alpha\in A$ we have $\dim_{\text{H}}\Lambda_{\alpha}=\dim_{\text{H}}\Lambda$.
\end{theorem}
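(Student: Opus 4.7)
The proof splits according to whether $\alpha\in A$. In both halves the bridge between the symbolic system $(\Sigma,\sigma)$ and the geometric one $(\Lambda,T)$ is the coding $\Pi$: since $\Pi$ is bijective off a countable set and satisfies $h(\mu,\sigma)=h(\Pi_\ast\mu,T)$ and $\lambda(\mu,\sigma)=\lambda(\Pi_\ast\mu,T)$ whenever $\lambda(\mu,\sigma)>0$, the variational supremum in our statement coincides with the one in Theorem~\ref{main-1}.

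For $\alpha\in\LL_\Phi\setminus A$, the lower bound is immediate from $\Lambda_\alpha\supseteq\Pi(X_\alpha\cap\tilde\Sigma)$ and Theorem~\ref{main-1}. The upper bound rests on two points. First, every ergodic $T$-invariant $\mu$ with $\mu(\Lambda_\alpha)=1$ is automatically hyperbolic: $\log T'\ge 0$ with equality only on the parabolic set $\mathcal{I}$, so $\lambda(\mu,T)=0$ would force $\mu=\sum_{x\in\mathcal{I}}c_x\delta_x$, giving $\Phi_\ast(\mu)\in A$, a contradiction. Second, decomposing $\Lambda_\alpha=\Pi(X_\alpha\cap\tilde\Sigma)\cup\Pi(X_\alpha\setminus\tilde\Sigma)$, the first piece is handled by Theorem~\ref{main-1}, and for the second I would cover $X_\alpha\setminus\tilde\Sigma$ by the sub-level sets $F_\delta=\{\omega:\liminf_n \frac{1}{n}\sum_{j<n} g(\sigma^j\omega)<\delta\}$ and use the fact that the constraint $\phi_n(\omega)/n\to\alpha\notin A$ forces the asymptotic density of parabolic visits along $\omega$ to be strictly less than $1$, by an amount depending on $\operatorname{dist}(\alpha,A)$. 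A Moran covering on cylinders where the $g$-averages exceed $\delta$ then yields $\dim_H\Pi(F_\delta\cap X_\alpha)\le\sup h/\lambda + o_\delta(1)$, and $\delta\downarrow 0$ closes the case.

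For $\alpha\in A$, write $\alpha=\sum_{i=1}^{k} t_i\alpha_i$ with $t_i\ge 0$, $\sum t_i=1$, and $\alpha_i=\lim_n \phi_n(x_i)/n$ at parabolic fixed points $x_i\in\mathcal{I}$. Fix $\epsilon>0$; the Assumption supplies $\nu\in\M(\Lambda,T)$ with $\lambda(\nu,T)>0$ and $h(\nu,T)/\lambda(\nu,T)>\dim_H\Lambda-\epsilon$. By measure concatenation I would build a Moran-type Cantor set $K\subseteq\Lambda_\alpha$ of Hausdorff dimension at least $\dim_H\Lambda-2\epsilon$, organised in levels $\ell=1,2,\ldots$. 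At level $\ell$, concatenate symbolically a $\nu$-generic word of length $N_\ell$ -- producing at least $\exp(N_\ell(h(\nu,T)-\epsilon))$ admissible continuations, each localised in a Bowen ball of radius $\exp(-N_\ell(\lambda(\nu,T)+\epsilon))$ via Katok's entropy formula and Egorov's theorem -- followed, for each $i$, by $\lfloor t_i s_\ell\rfloor$ iterates near $x_i$. Tune $(N_\ell,s_\ell)$ so that (a) at every intermediate time $n$, the empirical average $\phi_n/n$ lies within $\epsilon$ of $\alpha$ (using almost additivity to absorb $C(\Phi)$ in the gluing), and (b) the parabolic blocks account for only a negligible fraction of the overall geometric scale. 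A natural Bernoulli-type measure on $K$ then has local dimension at least $h(\nu,T)/\lambda(\nu,T)-\epsilon$ everywhere by a mass distribution argument, and $\epsilon\downarrow 0$ concludes.

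The principal obstacle is in the second case: (i) the $t_i$-proportions of time near each parabolic $x_i$ are essential to push $\phi_n/n\to\alpha$, (ii) parabolic iterates contribute no Lyapunov weight while contracting geometrically almost nothing, so $s_\ell\ll N_\ell$ is required to preserve dimension, yet (iii) $s_\ell$ must still be large enough that $\phi_n/n$ is close to $\alpha$ on \emph{all} sufficiently fine intermediate scales, not only at block endpoints. Reconciling (i)--(iii) via a careful scheduling of $(N_\ell,s_\ell)$, together with uniform distortion bounds from almost additivity and the near-optimal $\nu$ supplied by the Assumption, is the technical core.
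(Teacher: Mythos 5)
First, note that the paper itself does not prove Theorem~\ref{main-3}: it is imported from \cite{MaYao2013}, and the remark after Theorem~\ref{main-4} only asserts that the additive-potential proof there extends. So there is no in-paper argument to compare against line by line, and I am judging your sketch on its own. Your treatment of the case $\alpha\in\LL_\Phi\setminus A$ is sound in outline: identifying the two variational suprema through $\Pi_\ast$, observing that $\lambda(\mu,T)=0$ forces $\mu$ to be a convex combination of Dirac masses on $\mathcal I$ and hence $\Phi_\ast(\mu)\in A$, and reducing the upper bound to $\Pi(X_\alpha\setminus\tilde\Sigma)$ are the right moves. (In fact this last set is empty for $\alpha\notin A$: since $g\ge 0$ vanishes only over $\mathcal I$, any subsequence with $A_{n_k}g(\omega)\to 0$ forces the empirical measures to accumulate on convex combinations of the $\delta_{x_i}$, hence $\phi_{n_k}(\omega)/n_k$ to accumulate in $A$, contradicting $\phi_n/n\to\alpha$. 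So your $F_\delta$ covering can be dispensed with.)

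The case $\alpha\in A$ contains a genuine error: you demand $s_\ell\ll N_\ell$ ``to preserve dimension.'' If the parabolic blocks occupy a vanishing fraction of \emph{time}, the empirical measures along your orbits converge to $\nu$ and $\phi_n/n\to\Phi_\ast(\nu)$, which is in general not $\alpha$ (the Assumption gives no control on $\Phi_\ast(\nu)$); your Cantor set then sits in $\Lambda_{\Phi_\ast(\nu)}$ rather than $\Lambda_\alpha$, and your own condition (a) is unsatisfiable under (ii). The correct scheduling is the opposite: $s_\ell/N_\ell\to\infty$, so that the time-density of parabolic visits tends to $1$, the limiting empirical measure is $\sum_i t_i\delta_{x_i}$, and $\phi_n/n\to\alpha$. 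Dimension survives because the parabolic blocks are negligible \emph{geometrically}, not temporally: $T'(x_i)=1$, so the contraction accumulated over $s$ consecutive parabolic inverse iterates is $o(s)$, and one can choose $s_\ell\gg N_\ell$ while keeping the total parabolic contraction small compared with $\sum_{j\le\ell}N_j\lambda(\nu,T)$ (given $\delta_\ell\downarrow 0$, take $s_\ell=N_\ell/\delta_\ell$ with $s_\ell$ large enough that the per-step parabolic contraction has dropped below $\delta_\ell^2$). Equivalently, at the level of measures: for $\nu_s=s\nu+(1-s)\sum_i t_i\delta_{x_i}$ one has $h(\nu_s,T)/\lambda(\nu_s,T)=h(\nu,T)/\lambda(\nu,T)$ for all $s>0$ while $\Phi_\ast(\nu_s)\to\alpha$ as $s\to 0$ --- the same ratio-of-affine-functions trick used in the proof of Theorem~\ref{main-4} --- and $s\to 0$ means the parabolic time fraction tends to $1$. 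With your orientation of the inequality the entire mechanism that makes the parabolic case yield full dimension is lost, so as written the construction fails.
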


Finally we consider the Hausdroff dimension of $\Lambda_{irr}$. By Kingman's sub-additive ergodic Theorem, we have $\mu(\Lambda_{irr})=0$ for any $\mu\in\mathcal{M}(\Lambda,T)$. However, this set carries full topological entropy and full Hausdorff dimension in most cases\cite{BS2000}, especially in uniformly hyperbolic dynamics.  Motivated by the method in \cite{JJOP2010,FLW2002},  we can  proved it is also true for non-uniform hyperbolic dynamic systems in a simple way.
\begin{theorem}\label{main-4}
Under the assumption in Theorem \ref{main-3} and assume that $\sharp\mathcal{L}_{\Phi}\geq 2$, then $\dim_H\Lambda_{irr}=\dim_H\Lambda$.
\end{theorem}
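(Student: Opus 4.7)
The plan is to realize $\Lambda_{irr}$ as containing a Moran-like Cantor set built by \emph{concatenating generic blocks} of two invariant measures whose asymptotic $\Phi$-averages disagree but whose dimension ratios $h/\lambda$ are both close to $\dim_H\Lambda$; this is in the spirit of \cite{JJOP2010,FLW2002} and of Theorem \ref{main-3}.

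\textbf{Step 1: two well-chosen measures.} Fix $\epsilon>0$. By the Assumption preceding Theorem \ref{main-3}, pick $\nu_1\in\M(\Lambda,T)$ with $\lambda(\nu_1,T)>0$ and $h(\nu_1,T)/\lambda(\nu_1,T)>\dim_H\Lambda-\epsilon$; an ergodic decomposition argument lets me take $\nu_1$ ergodic. Because $\sharp\LL_\Phi\ge 2$, the same argument yields an ergodic $\mu'\in\M(\Lambda,T)$ with $\Phi_*(\mu')\neq\Phi_*(\nu_1)$. For a small $t>0$ I set $\nu_2=(1-t)\nu_1+t\mu'$. Since $h$, $\lambda$, and $\Phi_*$ are all affine on $\M(\Lambda,T)$, one obtains $\Phi_*(\nu_2)\neq\Phi_*(\nu_1)$, $\lambda(\nu_2,T)>0$, and $h(\nu_2)/\lambda(\nu_2)>\dim_H\Lambda-2\epsilon$ as soon as $t$ is small enough. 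All subsequent work is symbolic, via the conjugacy $\Pi$.

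\textbf{Step 2: generic blocks and the Cantor tree.} By Shannon--McMillan--Breiman together with the Kingman ergodic theorem applied to $\Phi$ and to $\log T'\circ\Pi$, for every $\eta>0$ and every large $n$ there is a family $\mathcal{F}_n^{(i)}$ of $n$-cylinders of $\Sigma$, of cardinality at least $e^{n(h(\nu_i)-\eta)}$, such that on each $C\in\mathcal{F}_n^{(i)}$: $|\phi_n/n-\Phi_*(\nu_i)|<\eta$, the interval $\Pi(C)$ has length in $[e^{-n(\lambda(\nu_i)+\eta)},e^{-n(\lambda(\nu_i)-\eta)}]$ with uniformly bounded distortion, and the $\nu_i$-mass of $C$ is at most $e^{-n(h(\nu_i)-\eta)}$. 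For the non-ergodic $\nu_2$, the family $\mathcal{F}_n^{(2)}$ is realised by explicit $(1-t){:}t$ concatenations of $\nu_1$-typical and $\mu'$-typical sub-blocks, whose cardinality, length, and mass estimates follow from the affinity recorded in Step~1. I then choose rapidly increasing $N_1\ll N_2\ll\cdots$ with $(C(\Phi)+L_{k-1}\|\Phi_*\|_\infty)/N_k\to 0$, where $L_k:=\sum_{j\le k}N_j$, and build a Cantor tree in $\Sigma$ whose admissible prefixes of length $L_k$ are the concatenations of an admissible prefix of length $L_{k-1}$ with any cylinder from $\mathcal{F}_{N_k}^{(i_k)}$, $i_k\in\{1,2\}$ alternating with $k$. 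Almost-additivity of $\Phi$ then yields $|\phi_{L_k}(\omega)/L_k-\Phi_*(\nu_{i_k})|<3\eta$ for every $\omega$ in the tree; because $\Phi_*(\nu_1)\neq\Phi_*(\nu_2)$, the partial averages cannot converge, and the projection of the tree lies inside $\Lambda_{irr}$.

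\textbf{Step 3: dimension and main obstacle.} I equip the tree with the Bernoulli product measure giving weight $1/|\mathcal{F}_{N_k}^{(i_k)}|$ to each child at level $k$. The cylinder-mass and diameter bounds from Step 2, combined with the rapid growth of $N_k$, turn the mass-distribution principle into $\dim_H\Lambda_{irr}\ge \min_i(h(\nu_i)-\eta)/(\lambda(\nu_i)+\eta)\ge \dim_H\Lambda-O(\epsilon+\eta)$. Letting $\epsilon,\eta\to 0$ gives $\dim_H\Lambda_{irr}\ge\dim_H\Lambda$, and the reverse inequality is trivial. The principal technical obstacle is controlling metric distortion in the $C^1$, non-uniformly hyperbolic setting, where the parabolic fixed points can make distortion blow up when symbolic cylinder masses are compared to projected-interval diameters. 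I plan to handle this exactly as in Theorems \ref{main-1} and \ref{main-3}, by arranging each $\mathcal{F}_n^{(i)}$ to lie inside $\tilde{\Sigma}$ so that $\frac{1}{n}\sum_j g\circ\sigma^j$ is bounded below by a positive constant along the entire tree; this restores uniform bounded distortion and legitimises the Moran-type dimension computation.
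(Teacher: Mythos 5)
Your overall architecture is the right one and matches the paper's: pick $\nu_1$ with $h/\lambda$ near $\dim_H\Lambda$, use $\sharp\LL_\Phi\ge 2$ to find $\mu'$ with a different $\Phi_*$-value, pass to the convex combination $(1-t)\nu_1+t\mu'$ so that both measures have near-maximal dimension ratio but distinct averages, and then build an alternating Moran set inside $\Lambda_{irr}$. However, there is a genuine gap in Steps 2--3, precisely at the point you wave through with ``combined with the rapid growth of $N_k$.'' Your tree appends, at level $k$, a \emph{single} block of length $N_k\gg L_{k-1}$ drawn from $\mathcal{F}_{N_k}^{(i_k)}$, and your measure gives each such child weight $1/|\mathcal{F}_{N_k}^{(i_k)}|$. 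The rapid growth $L_{k-1}/N_k\to 0$ is indeed forced by the irregularity requirement, but it destroys the mass distribution principle at the intermediate scales inside the level-$k$ block: for $n=L_{k-1}+j$ with $0<j\ll N_k$, the best available bound on $\eta([\omega|_n])$ is essentially $\eta([\omega|_{L_{k-1}}])\approx e^{-L_{k-1}(h-\eta)}$ (the SMB/counting estimates for $\mathcal{F}_{N_k}^{(i_k)}$ carry multiplicative errors of size $e^{N_k\eta}$, so they give no improvement until $j\gtrsim N_k\eta/h$), while the diameter of $I_n(\omega)$ has already shrunk to roughly $e^{-(L_{k-1}+j)\lambda}$. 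The resulting local exponent $\frac{L_{k-1}(h-\eta)}{(L_{k-1}+j)\lambda}$ tends to $0$ along these scales when $N_k\eta\gg L_{k-1}$, so the mass distribution principle yields nothing. This is exactly the classical tension in such constructions, and it is why the paper does \emph{not} use one long generic block per level: in its set $M=\prod_i\prod_{j=1}^{N_i}\Sigma(i)$, super-level $i$ consists of $N_i$ \emph{independent} copies of short blocks of fixed length $l_i$, each weighted by the (normalized) measure $\mu_i$ or $\nu_i$ of the corresponding $l_i$-cylinder, so the entropy of the constructed measure refreshes every $l_i$ symbols and every intermediate cylinder $[\omega|_n]$ has mass $\approx e^{-n(h-O(\epsilon))}$. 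You would need to replace your single-block levels by this repeated-short-block design (or re-tune $\eta=\eta_k$ against $N_k$ and $L_{k-1}$ with considerable care) before Step 3 is valid.

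Two smaller points. First, in the $C^1$ setting there is no uniformly bounded distortion, even on $\tilde\Sigma$; what is available (and what the paper uses, via Lemma \ref{appro}) is only the uniform sub-exponential comparison $|\tilde\lambda_n(\omega)-A_ng(\omega)|\to 0$, which suffices for the dimension computation but should not be asserted as bounded distortion. Second, extracting an ergodic $\nu_1$ by ergodic decomposition is delicate here because ergodic components with zero Lyapunov exponent (Dirac masses at parabolic fixed points) can absorb the entropy--exponent balance; the paper instead invokes Lemma \ref{basic} (ergodic approximation with convergence of $h$, $\lambda$ and, via Lemma \ref{continuity}, of $\Phi_*$), which also handles the non-ergodic convex combination uniformly and spares you the ad hoc ``concatenated $(1-t){:}t$ blocks'' needed to make SMB-type estimates for $\nu_2$.
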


\begin{remark}
In \cite{MaYao2013}, Theorem\ref{main-1} and Theorem \ref{main-3} are proved for the case of additive potential in higher dimension. The skills there can be completely extended to the almost additive potential.
\end{remark}

\begin{remark}
There has been great interest in the study of irregular set in recent trend \cite{BD2009,Olsen, FFW, FLW2002,Thompson2010}. The full dimension of irregular set has been verified in subshifit of finite type\cite{BS2000, FFW}, conformal repellers\cite{BD2009, FFW}. It is interesting to ask the corresponding question in the non-uniform hyperbolic dynamic systems. It is possible to follow the line in \cite{FLW2002} and use the approximation skills as \cite{GR2009} to give a proof for the Hausdroff dimension of irregular set. Here we combine some ideas in \cite{BS2000, FLW2002, JJOP2010} to give a short and direct proof.
\end{remark}

The rest of this  note is organized as follows.  In Section \ref{preliminary} we give  some preliminary results  and lemmas which are needed for the proof.
In Section \ref{thm4}, we prove Theorem \ref{main-4}.

\section{Preliminaries}\label{preliminary}

In this section, we will give the notations and the lemmas needed in the proof.

Assume that $T:X\to X$ is a topological dynamical system.
Denote by $\M(X,T)$ the set of all invariant probability measures and ${\mathcal E}(X,T)$ the set of all ergodic probability measures. Given $\mu\in\M(X,T)$, let $h(\mu,T)$ be the metric entropy of $\mu$.

Recall that  $\mathcal{A}=\{1,2\dots m\}$ and $\Sigma=\mathcal{A}^{\mathbb{N}}$.  Write
$
\Sigma_n=\{w=w_1\cdots w_n: w_i\in\mathcal{A}\}.
$
 For $\omega=\{\omega_{n}\}_{n=1}^{\infty}\in \Sigma$, write $\omega|_n=\omega_1\cdots \omega_n$.
 For $w\in\Sigma_n$  define the cylinder $[w]:=\{\omega: \omega|_n=w\}$.

 If $\phi:\Sigma\rightarrow \mathbb{R}^d$ is continuous,we define the {\it $n$-th variation} of $\Phi$  as$$||\phi||_n:=\sup\limits_{\omega|_{n}=\tau|_{n}}|\phi(\omega)-\phi(\tau)|.$$
For $\Phi\in\mathcal{C}_{aa}(\Sigma,T,d)$, we define $||\Phi||_n:=||\phi_n||_n$.

where $|\cdot|$  is the Euclidean norm in $\R^{d}$. Given $f: \Sigma\rightarrow \R^d$ continuous, let
$\|f\|:=\underset{\tau\in\Sigma}{\sup}|f(\tau)|$.  For $f: \Lambda\rightarrow \R^d$ continuous we define $\|f\|$ similarly.
 We have the following standard result:

\begin{lemma}\label{variation}
If $\Phi=\{\phi_{n}\}_{n=1}^{\infty}\in \mathcal{C}_{aa}(\Sigma,T,d)$, then $\lim\limits_{n\rightarrow\infty}\frac{1}{n}\|\Phi\|_{n}=0$
\end{lemma}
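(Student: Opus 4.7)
The plan is to exploit iterated almost additivity to decompose $\phi_n$ into blocks of a fixed length $k$, then use uniform continuity of the single function $\phi_k$ (on the compact space $\Sigma$) to control most of the block-terms, and finally let $k$ grow to absorb the additive constant $C(\Phi)$ coming from iteration. The point is that $\|\Phi\|_n$ has \emph{two} sources of growth: the genuine variation across blocks (controlled by continuity) and the accumulated error $\sim n C(\Phi)/k$ from almost additivity (controlled by taking $k$ large).

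First I would establish a crude uniform bound: since $\phi_1$ is continuous on compact $\Sigma$, $M:=\|\phi_1\|<\infty$; iterating the upper and lower almost-additive inequalities gives $\|\phi_n\|_\infty \le n(M+C(\Phi))$. In particular each $\phi_k$ is uniformly continuous on $\Sigma$, so for any $\delta>0$ there exists $L=L(k,\delta)$ such that $\omega|_L=\tau|_L$ forces $|\phi_k(\omega)-\phi_k(\tau)|<\delta$.

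Next, given $\epsilon>0$, pick $k$ so large that $2C(\Phi)/k<\epsilon/3$, and pick $\delta$ so that $\delta/k<\epsilon/3$; fix the corresponding $L$. For $n\gg k$ write $n=qk+r$ with $0\le r<k$. Iterating the almost-additive inequality yields
\[
\phi_n(\omega)\le\sum_{j=0}^{q-1}\phi_k(\sigma^{jk}\omega)+\phi_r(\sigma^{qk}\omega)+qC(\Phi),
\]
and the reversed inequality for $\phi_n(\tau)$, so
\[
|\phi_n(\omega)-\phi_n(\tau)|\le\sum_{j=0}^{q-1}|\phi_k(\sigma^{jk}\omega)-\phi_k(\sigma^{jk}\tau)|+2\|\phi_r\|_\infty+2qC(\Phi).
\]
When $\omega|_n=\tau|_n$, the shifted points $\sigma^{jk}\omega,\sigma^{jk}\tau$ agree on their first $(q-j)k+r$ symbols. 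For indices $j$ with $(q-j)k+r\ge L$ (all but $O(L/k)$ of them) the summand is $<\delta$; the remaining $O(L/k)$ summands are bounded by $2\|\phi_k\|_\infty\le 2k(M+C(\Phi))$, and $\|\phi_r\|_\infty\le k(M+C(\Phi))$. Dividing by $n\ge qk$ and sending $n\to\infty$ leaves a bound $\le \delta/k+2C(\Phi)/k<\epsilon$; since $\epsilon$ was arbitrary, $\|\Phi\|_n/n\to 0$. For the vector-valued case one simply applies the scalar argument coordinatewise and uses the equivalence of norms on $\R^d$.

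The only subtlety I anticipate is the bookkeeping of the ``bad'' tail blocks and the constant $C(\Phi)$: one must commit to $k$ large enough \emph{before} invoking uniform continuity of $\phi_k$, because the uniform-continuity modulus $L$ depends on $k$. The whole argument hinges on recognising that $C(\Phi)$ is fixed while $k$ is free, so that the seemingly-linear error $2qC(\Phi)$ becomes $2C(\Phi)/k$ per unit length and can be made arbitrarily small.
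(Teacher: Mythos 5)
Your proof is correct: the block decomposition $n=qk+r$ via iterated almost additivity, uniform continuity of the single function $\phi_k$ on the compact space $\Sigma$, and the observation that the accumulated error $2qC(\Phi)$ contributes only $2C(\Phi)/k$ per unit length are exactly the standard argument for this lemma, and your bookkeeping (the $O(L/k)$ bad tail blocks absorbed as an $O(1)$ term before dividing by $n\ge qk$) is sound. Note that the paper itself offers no proof --- it states the lemma as ``the following standard result'' --- so your write-up supplies precisely the omitted standard proof rather than diverging from anything in the text.
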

Consider the projection $\Pi: \Sigma\to \Lambda$. Let
$\tilde \Lambda:=\{x\in \Lambda: \# \{\Pi^{-1}(x)\}=2\}$.
In other words $\tilde \Lambda$ is the set of such $x$ with two codings. By our assumption on $I_j$, we know that both  $\tilde \Lambda$ and $\Pi^{-1}\tilde \Lambda$ are  at most countable. Moreover
\begin{equation}\label{coding}
\Pi^{-1}\tilde \Lambda\subset \{\omega: \omega= wm^\infty \text{ or } \tilde w 1^\infty\}.
\end{equation}

Then it is seen that
$$
\Pi: \Sigma\setminus \Pi^{-1}(\tilde \Lambda)\to \Lambda\setminus \tilde \Lambda
$$
is a bijection. We will need this fact in the proof of the lower bound of Theorem \ref{main-1}.

For $w=w_1\cdots w_n$, write $I_w=T_{w_1}\circ \dots \circ T_{w_n}[0,1].$ Especially for $\omega\in\Sigma$, we write $I_n(\omega)=I_{\omega|_n}$.
Let $D_{n}(\omega)=\diam(I_{n}(\omega))$.
Recall that we have defined
  $g(\omega):=-\log T'_{\omega_1}\Pi(\sigma \omega)$ and

$D_n(\omega)$ can be estimated via $A_ng(\omega)$ by the following lemma:
\begin{lemma}[\cite{Urbanski1996,JJOP2010}]\label{appro}
Under the assumption on $T$, $D_{n}(\omega)$  converges to $0$ uniformly. Moreover
$$
\lim\limits_{n\rightarrow \infty}\sup\limits_{\omega\in \Sigma}\left\{|-\frac{1}{n}\log D_{n}(\omega)-A_{n}g(\omega)|\right\}=0.
$$
\end{lemma}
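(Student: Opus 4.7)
The strategy is: (i) establish uniform decay of $D_n$ via a Dini-type argument; (ii) rewrite $-\log D_n(\omega)$ as a Birkhoff-type sum using the mean value theorem; (iii) compare this sum termwise with $nA_n g(\omega)$ using the uniform continuity of $\log T'$ on each branch.

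\emph{Step (i): uniform decay.} For fixed $\omega$ the nested closed intervals $I_n(\omega)$ shrink to the point $\Pi(\omega)$, since each inverse branch $T_i$ is a contraction (strict off of $\{x_i\}$ because $T'>1$ there). The only delicate case is when the itinerary $\omega$ is asymptotically trapped at a parabolic letter $j$; there one uses that $T_j^n[0,1]\to\{x_j\}$ monotonically and hence uniformly on $[0,1]$ by Dini's theorem applied to the iterates $T_j^n$. This gives $D_n(\omega)\to 0$ pointwise. Since $D_n$ is constant on cylinders of length $n$, it is continuous on the compact space $\Sigma$, and it is monotone decreasing in $n$, so a second application of Dini's theorem upgrades the pointwise decay to uniform decay $\sup_{\omega\in\Sigma}D_n(\omega)\to 0$.

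\emph{Step (ii): MVT rewriting.} The map $T^n:I_n(\omega)\to[0,1]$ is a $C^1$ diffeomorphism onto a unit-length interval, so by the mean value theorem there exists $\xi_n(\omega)\in I_n(\omega)$ with $(T^n)'(\xi_n(\omega))\cdot D_n(\omega)=1$; the chain rule then gives $-\log D_n(\omega)=\sum_{j=0}^{n-1}\log T'(T^j\xi_n(\omega))$. The inverse function theorem yields $T'_{\omega_1}(\Pi(\sigma\omega))=1/T'(\Pi(\omega))$, so $g(\omega)=\log T'(\Pi(\omega))$ and consequently $nA_ng(\omega)=\sum_{j=0}^{n-1}\log T'(T^j\Pi(\omega))$.

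\emph{Step (iii): termwise comparison.} For each $j$, both $T^j\xi_n(\omega)$ and $T^j\Pi(\omega)$ lie in the sub-interval $I_{n-j}(\sigma^j\omega)\subset I_{\omega_{j+1}}$ of diameter $D_{n-j}(\sigma^j\omega)$. Because $T$ is $C^1$ on each closed branch $I_k$, $\log T'$ is bounded and uniformly continuous on each $I_k$; together with the uniform decay from Step (i), for any $\epsilon>0$ there exists $N$ such that $n-j\ge N$ forces $|\log T'(T^j\xi_n(\omega))-\log T'(T^j\Pi(\omega))|<\epsilon$. Splitting the sum at $j=n-N$, the first $n-N$ indices contribute at most $\epsilon$ each and the last $N$ indices at most $2\|\log T'\|$ each, so
\[
  \Big|-\tfrac{1}{n}\log D_n(\omega)-A_ng(\omega)\Big|\;\le\;\epsilon+\frac{2N\|\log T'\|}{n}
\]
uniformly in $\omega$; sending $n\to\infty$ and then $\epsilon\to 0$ finishes the argument. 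The main obstacle I anticipate is the pointwise decay in Step (i) for itineraries trapped near a parabolic fixed point, since this is precisely where uniform hyperbolicity fails; the remaining steps are routine chain-rule / MVT / Dini manipulations on the compact symbolic space.
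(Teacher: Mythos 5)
The paper does not prove this lemma at all: it is imported wholesale from \cite{Urbanski1996,JJOP2010} (it is essentially the ``tempered distortion'' lemma of Johansson--Jordan--\"Oberg--Pollicott). So your proposal cannot match or diverge from an in-paper argument; what it does is supply a self-contained proof, and on the whole it is a correct one, in the same spirit as the cited source: uniform shrinking of cylinders plus uniform continuity of $\log T'$ on each closed branch is exactly the right substitute for bounded distortion, and it is the correct mechanism here because the branches are only $C^1$, so no H\"older/Lipschitz distortion estimate is available. Steps (ii) and (iii) are sound: the mean value theorem gives $\xi_n\in I_n(\omega)$ with $-\log D_n(\omega)=\sum_{j=0}^{n-1}\log T'(T^j\xi_n)$, the identity $g(\omega)=\log T'(\Pi(\omega))$ is a correct use of the inverse function theorem, both comparison points $T^j\xi_n$ and $T^j\Pi(\omega)$ do lie in the same sub-cylinder $I_{n-j}(\sigma^j\omega)$ inside a single branch, and the split of the sum at $j=n-N$ yields the uniform bound $\epsilon+2N\|\log T'\|/n$. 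The Dini upgrade in Step (i) is also a nice touch: $D_n$ is locally constant on $n$-cylinders, hence continuous on the compact space $\Sigma$, and monotone in $n$, so pointwise decay does imply uniform decay.

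The one incomplete point is the pointwise decay itself in Step (i). Your dichotomy --- ``infinitely many strictly contracting letters'' versus ``trapped at a single parabolic letter $j$'' --- misses itineraries whose tail visits two or more parabolic letters infinitely often (e.g.\ a tail of the form $j^{a_1}k\,j^{a_2}k\cdots$ with $j,k$ both parabolic), where no single branch is iterated and no letter gives a uniform contraction on all of $[0,1]$. This is fixable in either of two ways. First, each change of letter in the tail applies $T_j$ to a compact subset of $I_k$ with $k\neq j$; since $x_j\notin I_k$ and $T_j$ is injective with $T_j x_j=x_j$, the image $T_j I_k$ is a compact subset of $I_j$ avoiding $x_j$, on which $T'>1$ is bounded away from $1$, so every letter change contributes a definite contraction factor and infinitely many changes force $D_n(\omega)\to 0$; combined with your trapped case this covers all $\omega$. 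Alternatively, avoid the case analysis entirely: if $J=\bigcap_n I_n(\omega)$ had length $c>0$, then $|T^nJ|$ is nondecreasing and each increment equals $\int_{T^nJ}(T'-1)$, which is bounded below by a positive constant depending only on $c$ (an interval of length $\geq c$ inside a branch cannot make this integral small, since $T'-1$ vanishes at most at one point of each branch), forcing $|T^nJ|\to\infty$, a contradiction. A last cosmetic remark: Dini's theorem for $T_j^n\to x_j$ requires a fixed direction of monotonicity, so split $[0,1]$ at $x_j$; and in fact for Step (i) you only need the endpoints $T_j^n(0),T_j^n(1)\to x_j$, so Dini is not even needed there. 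With these repairs your argument is a complete and elementary proof of the lemma the paper only cites.
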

 By this lemma we can understand that $\tilde \Sigma$ is the set  of such points $\omega$ such that the length of $I_n(\omega)$ tends to 0 exponentially. To simplify the notation we write $\tilde \lambda_n(\omega)=-\log D_n(\omega)/n.$

 Given $\mu\in\M(\Sigma,\sigma)$,  let
 $\lambda(\mu,\sigma):=\int g d\mu$ be the Lyapunov exponent of $\mu.$ Similarly given $\mu\in\M(\Lambda,T)$, let
 $\lambda(\mu,T):=\int \log|T^\prime| d\mu$ be the Lyapunov exponent of $\mu.$ For a $\mu\in\M(\Sigma,\sigma)$, we denote the image of $\mu$ under $\Pi$ by $\Pi_*\mu$.

The following lemma, which is a combination of Lemma 2 and Lemma 3 in \cite{JJOP2010}, is very useful in our proof.

\begin{lemma}\label{basic}
For any $\mu\in \mathcal{M}(\Sigma,\sigma)$, there exists  a sequence of ergodic measures $\{\mu_n:n\ge 1\}$ such that
  $\mu_n\rightarrow \mu$   in the weak star topology and
 $$
 h(\mu_n,\sigma)\rightarrow h(\mu,\sigma);
 \ \ \ \lambda(\mu_n,\sigma)\rightarrow \lambda(\mu,\sigma).
 $$
\end{lemma}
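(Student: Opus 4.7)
The plan is to split the lemma into two tasks: achieving weak-$*$ approximation with entropy convergence (the nontrivial part), then deducing the Lyapunov exponent convergence as a byproduct.

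First, observe that $g(\omega) = -\log T'_{\omega_1}(\Pi(\sigma\omega))$ is bounded and continuous on $\Sigma$: each $T|_{I_j}$ is $C^1$ on the compact interval $I_j$, so $T'$ there is bounded above and below away from $0$, and continuity holds because the first coordinate $\omega_1$ stabilizes along any convergent sequence in $\Sigma$ while $\Pi$ is continuous. Hence $\nu \mapsto \lambda(\nu,\sigma) = \int g\,d\nu$ is continuous in the weak-$*$ topology, so the Lyapunov convergence is automatic once weak-$*$ convergence is in hand. The task reduces to producing ergodic $\mu_n \to \mu$ weakly with $h(\mu_n,\sigma) \to h(\mu,\sigma)$.

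Next I would reduce to finite convex combinations of ergodic measures. The ergodic decomposition gives $\mu = \int \nu\,d\tau(\nu)$ together with its entropy version $h(\mu,\sigma) = \int h(\nu,\sigma)\,d\tau(\nu)$. Approximating this integral by a finite Riemann sum (partition the support of $\tau$ into small weak-$*$ balls and pick a representative ergodic measure in each), for any $\epsilon>0$ I obtain ergodic measures $\nu_1,\dots,\nu_k$ and weights $t_1,\dots,t_k \geq 0$ summing to $1$ with $\bar\nu := \sum_i t_i \nu_i$ lying within $\epsilon$ of $\mu$ in a fixed weak-$*$ metric and $\sum_i t_i h(\nu_i,\sigma) \geq h(\mu,\sigma) - \epsilon$.

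The heart of the argument is the \emph{measure concatenation} step, replacing $\bar\nu$ by a single ergodic measure $\eta$. Using Shannon--McMillan--Breiman, for each $i$ I pick a length $L$ (taken uniformly large across $i$) and a ``typical'' set $G_i \subset \Sigma_L$ with $\nu_i\bigl(\bigcup_{w \in G_i}[w]\bigr) > 1 - \epsilon$ and $|G_i| \geq \exp(L(h(\nu_i,\sigma) - \epsilon))$. Choose integers $n_i$ with $n_i / \sum_j n_j$ close to $t_i$, set $N = L\sum_i n_i$, and define $\eta$ as the $\sigma$-shift average of the $\sigma^N$-invariant measure whose sample sequences, within one period of length $N$, independently draw one word uniformly from each $G_i$ for $n_i$ consecutive slots in cyclic order (an irreducible Markov structure on the block alphabet). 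Then $\eta$ is $\sigma$-invariant; ergodicity follows from the Bernoulli/Markov factor structure; weak-$*$ closeness to $\bar\nu$ holds because the empirical length-$\ell$ cylinder distribution in $\eta$ agrees with that of $\bar\nu$ up to boundary error of relative density $O(\ell/L)$; and $h(\eta,\sigma) \geq N^{-1}\sum_i n_i \log|G_i| \geq \sum_i t_i h(\nu_i,\sigma) - O(\epsilon)$. The main obstacle lies in this concatenation step: controlling the boundary effects between consecutive $G_i$-blocks, the integer-rounding error when matching $n_i/\sum_j n_j$ to $t_i$, and (most delicate) verifying genuine $\sigma$-ergodicity rather than mere $\sigma^N$-ergodicity of the shift-averaged block-i.i.d.\ measure. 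All three errors become negligible as $L$ and $\sum_i n_i$ grow, so letting $\epsilon \to 0$ along a diagonal sequence produces the required sequence $\mu_n$.
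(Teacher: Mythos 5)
The paper does not actually prove this lemma: it is quoted verbatim as ``a combination of Lemma 2 and Lemma 3 in \cite{JJOP2010}'', so your blind attempt is being compared against a citation rather than an argument, and what you have written is a self-contained proof along the standard ``ergodic measures are entropy-dense in a full shift'' line. Your overall route is sound, and two of your design choices are genuinely good: (i) reducing the Lyapunov statement to nothing, since $g$ is bounded and continuous on $\Sigma$ (here $T'\ge 1$ and is continuous on each compact $I_j$, and $\Pi$ is continuous), so $\nu\mapsto\lambda(\nu,\sigma)=\int g\,d\nu$ is weak-$*$ continuous; and (ii) the ergodic-decomposition Riemann-sum step, which works provided in each cell you pick a representative whose entropy is at least the cell average of $h$ (such a point exists because the average is bounded by the essential supremum). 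Moreover, the issue you flag as most delicate dissolves: if $\rho$ is $\sigma^N$-ergodic, then $\eta=\frac{1}{N}\sum_{k=0}^{N-1}\sigma^k_*\rho$ is automatically $\sigma$-ergodic, since any $\sigma$-invariant $f$ is $\sigma^N$-invariant, hence $\sigma^k_*\rho$-a.e.\ constant for each $k$, and the constants coincide because $\int f\,d\sigma^k_*\rho=\int f\,d\rho$. The entropy bound $h(\eta,\sigma)=\frac{1}{N}h(\rho,\sigma^N)=\frac{1}{N}\sum_i n_i\log|G_i|$ is likewise standard, and no admissibility issues arise because $\Sigma$ is a full shift.

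Two patches are needed. First, defining $G_i$ only via Shannon--McMillan--Breiman does not justify your weak-$*$ closeness claim: a word drawn uniformly from an entropy-typical set need not have empirical $\ell$-cylinder distribution near $\nu_i$, because the uniform and $\nu_i$-conditional measures on $G_i$ can differ by a factor of order $e^{O(\epsilon L)}$, which swamps the $O(\ell/L)$ boundary estimate. The standard fix is to intersect $G_i$ with the frequency-typical words (by the ergodic theorem, the length-$L$ words whose empirical distribution on a fixed finite family of cylinders is $\epsilon$-close to $\nu_i$ carry $\nu_i$-mass tending to $1$), after which every single word in $G_i$ has good statistics and your closeness argument is legitimate. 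Second, your construction only bounds $h(\mu_n)$ from below by $h(\mu)-O(\epsilon)$; the stated convergence $h(\mu_n,\sigma)\to h(\mu,\sigma)$ also needs $\limsup_n h(\mu_n,\sigma)\le h(\mu,\sigma)$, which you should invoke explicitly as upper semicontinuity of the entropy map on a full shift (an expansive system). One further remark: the paper later relies on the approximating ergodic measures being atomless (see the remark following the lemma), and your block-i.i.d.\ measures can be atomic when all $G_i$ are singletons (e.g.\ $\mu$ a Dirac mass at a fixed point); inserting a free coordinate with small positive frequency repairs this at negligible cost in entropy and distance.
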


We remark that from their proof each  ergodic measure $\mu_n$ is continuous, i.e. $\mu_n$ has no atom.

\begin{lemma}\label{continuity}
Assume that $\Phi\in \mathcal{C}_{aa}(\Sigma,\sigma,d)$, and given a sequence of measures $\{\mu_n\}_{n=1}^{\infty}$, such that $\lim\limits_{n\rightarrow\infty}\mu_n=\mu\in \mathcal{M}(\Sigma,\sigma)$, then
$\lim\limits_{n\rightarrow\infty}\lim\limits_{m\rightarrow\infty}\frac{1}{m}\int\phi_{m}d\mu_{n}=\lim\limits_{m\rightarrow\infty}\frac{1}{m}\int \phi_{m}d\mu$,
i.e, $\lim\limits_n\Phi_*(\mu_n)=\Phi_*(\mu)$.
\end{lemma}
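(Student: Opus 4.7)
The plan is to reduce the lemma to a standard uniform convergence argument. The inner limits on both sides of the asserted identity are, by definition, $\Phi_*(\mu_n)$ and $\Phi_*(\mu)$, so what is really being asked is the weak-$*$ continuity of the functional $\Phi_*$ on $\M(\Sigma,\sigma)$. For each fixed $m$, the map $\mu\mapsto \frac{1}{m}\int\phi_m\,d\mu$ is already weak-$*$ continuous, because $\phi_m:\Sigma\to\R^{d}$ is continuous and hence bounded on the compact space $\Sigma$. The missing ingredient is that $\frac{1}{m}\int\phi_m\,d\mu$ converges to $\Phi_*(\mu)$ at a rate that is uniform in $\mu$.

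The main step is therefore to produce such a uniform rate from the almost-additivity constant $C(\Phi)$. Starting from $\phi_{n+p}(x)\leq \phi_n(x)+\phi_p(\sigma^n x)+C(\Phi)$, I would iterate this inequality and, writing $m=kn+r$ with $0\le r<n$, obtain
\[
\phi_m(x)\leq \sum_{j=0}^{k-1}\phi_n(\sigma^{jn}x)+\phi_r(\sigma^{kn}x)+kC(\Phi).
\]
Integrating with respect to $\mu\in\M(\Sigma,\sigma)$ and using $\sigma$-invariance gives $\int\phi_m\,d\mu \leq k\int\phi_n\,d\mu+\int\phi_r\,d\mu+kC(\Phi)$. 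Dividing by $m$ and letting $m\to\infty$ (so $k/m\to 1/n$, while $\frac{1}{m}\int\phi_r\,d\mu\to 0$ uniformly in $\mu$ since $\|\phi_r\|$ is bounded for $0\le r<n$) yields $\Phi_*(\mu)\leq \frac{1}{n}\int\phi_n\,d\mu+\frac{C(\Phi)}{n}$. The symmetric bound from $\phi_{n+p}(x)\geq \phi_n(x)+\phi_p(\sigma^n x)-C(\Phi)$ gives the matching lower inequality, so that
\[
\left|\frac{1}{n}\int\phi_n\,d\mu-\Phi_*(\mu)\right|\leq \frac{C(\Phi)}{n}
\]
for every $\mu\in\M(\Sigma,\sigma)$ and every $n$, which is the uniform rate I want.

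Equipped with this estimate, the rest is a standard $\varepsilon/3$ argument. Given $\varepsilon>0$, first choose $n$ so large that $C(\Phi)/n<\varepsilon/3$; then both $|\Phi_*(\mu)-\frac{1}{n}\int\phi_n\,d\mu|$ and $|\Phi_*(\mu_k)-\frac{1}{n}\int\phi_n\,d\mu_k|$ are at most $\varepsilon/3$ for every $k$. With this $n$ fixed, the continuity of $\phi_n$ and the weak-$*$ convergence $\mu_k\to\mu$ produce a $K$ beyond which $|\frac{1}{n}\int\phi_n\,d\mu_k-\frac{1}{n}\int\phi_n\,d\mu|<\varepsilon/3$. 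The triangle inequality then gives $|\Phi_*(\mu_k)-\Phi_*(\mu)|<\varepsilon$, proving continuity. For vector-valued $\Phi$ one simply applies the argument coordinatewise, replacing $|\cdot|$ by the Euclidean norm wherever needed.

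I do not foresee a real obstacle: the entire argument rests on the uniform rate estimate, and that estimate is an elementary consequence of almost-additivity together with $\sigma$-invariance of $\mu$, requiring no ergodicity, no hyperbolicity, and no specification property. The only mildly delicate point is justifying that the ``residual'' term $\frac{1}{m}\int\phi_r\,d\mu$ vanishes uniformly in $\mu$ as $m\to\infty$ for fixed $n$, which holds because $\max_{0\le r<n}\|\phi_r\|<\infty$ by continuity and compactness.
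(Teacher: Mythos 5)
Your proposal is correct and takes essentially the same route as the paper: the paper applies Fekete's lemma to the subadditive and superadditive sequences $m\mapsto\int(\phi_m+C)\,d\mu_n$ and $m\mapsto\int(\phi_m-C)\,d\mu_n$ to obtain, for every invariant measure and every $m$, the sandwich $\frac{1}{m}\int(\phi_m-C)\,d\mu_n\le\Phi_*(\mu_n)\le\frac{1}{m}\int(\phi_m+C)\,d\mu_n$, which is exactly your uniform rate $\left|\frac{1}{m}\int\phi_m\,d\mu-\Phi_*(\mu)\right|\le C(\Phi)/m$, and then exchanges the two limits just as you do via weak-$*$ continuity of $\mu\mapsto\int\phi_m\,d\mu$ for fixed $m$. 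Your derivation of the rate by iterating the pointwise inequality with $m=kn+r$ is an equivalent, slightly more hands-on substitute for the Fekete step; the rest coincides.
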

\begin{proof}
Let $C$ be a constant vector with each coordinate positive such that
\[-C+\phi_{n}(T^p x)+\phi_{p}(x)\leq \phi_{n+p}(x)\leq C+\phi_{n}(T^p x)+\phi_{p}(x)\]
$\forall n,p\in \mathbb{N}, x\in X$.
By sub-additivity of the families $\{\phi_{m}(x)+C\}_{m=1}^{\infty}$ and $\{\phi_{m}(x)-C\}_{m=1}^{\infty}$, we have
\[\lim\limits_{m\rightarrow\infty}\frac{1}{m}\int(\phi_{m}-C)d\mu_{n}=\lim\limits_{m\rightarrow\infty}\frac{1}{m}\int
\phi_{m}d\mu_{n}=\lim\limits_{m\rightarrow\infty}\frac{1}{m}\int(\phi_{m}+C)d\mu_{n}\]
and
\[\sup_{m}\frac{1}{m}\int(\phi_{m}-C)d\mu_{n}=\lim\limits_{m\rightarrow\infty}\frac{1}{m}\int
\phi_{m}d\mu_{n}=\inf_{m}\frac{1}{m}\int(\phi_{m}+C)d\mu_{n}.\]
Thus we get
\[\frac{1}{m}\int(\phi_{m}-C)d\mu_{n}=\lim\limits_{m\rightarrow\infty}\frac{1}{m}\int
\phi_{m}d\mu_{n}=\frac{1}{m}\int(\phi_{m}+C)d\mu_{n}.\]
Then taking $n$ goes to infinity, and $m$ goes to infinity, we get the desired result.
\end{proof}

\section {\bf Proof of Irregular set Theorem \ref{main-4}.}\label{thm4}
For $\Phi\in \mathcal{C}_{aa}(\Lambda,T,d)$, we define $\Psi=\Phi\circ\Pi$.  It is rather easy to check $\mathcal{L}_\Phi=\mathcal{L}_\Psi$.
Then Theorem \ref{main-4} is a immediately consequence of the following Lemma.
\begin{lemma}\label{deviation}
For any $\mu, \nu\in \mathcal{M}(\sigma,\Sigma)$ with $\lambda(\mu,\sigma)>0$,  $\lambda(\nu,\sigma)>0$ and $\Psi_{*}(\mu)\neq \Psi_{*}(\nu)$,  we have
\[\dim_{H}\Lambda_{irr}\geq \min\left\{\frac{h(\mu,\sigma)}{\lambda(\mu,\sigma)}, \frac{h(\nu,\sigma)}{\lambda(\nu,\sigma)}\right\}.\]
\end{lemma}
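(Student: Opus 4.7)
\medskip

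\noindent\textbf{Proof plan for Lemma \ref{deviation}.}

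The plan is to build, inside $\tilde\Sigma$, a Moran--Cantor subset $F\subset\Sigma$ by alternately concatenating blocks that are ``typical'' for $\mu$ and ``typical'' for $\nu$, arrange the block lengths so that the Birkhoff-type averages $\Psi_n(\omega)/n$ oscillate between $\Psi_*(\mu)$ and $\Psi_*(\nu)$ (using $\Psi_*(\mu)\neq\Psi_*(\nu)$), and then apply a mass-distribution estimate to push the Hausdorff dimension of $\Pi(F)\subset \Lambda_{irr}$ down to the claimed minimum. Fix $\varepsilon>0$; the goal is to produce such an $F$ with $\dim_H\Pi(F)\ge \min\{h(\mu,\sigma)/\lambda(\mu,\sigma),h(\nu,\sigma)/\lambda(\nu,\sigma)\}-\varepsilon$.

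\emph{Step 1: Reduction to ergodic, non-atomic measures and uniform choice of good cylinders.} By Lemma \ref{basic}, replace $\mu$, $\nu$ by nearby ergodic non-atomic measures without changing the relevant ratios by more than $\varepsilon$; note that $\Psi_*$ varies continuously under this weak-$*$ convergence by Lemma \ref{continuity}, so we may keep $\Psi_*(\mu)\neq\Psi_*(\nu)$. For any ergodic $\rho\in\{\mu,\nu\}$ with $\lambda(\rho,\sigma)>0$, combine the Shannon--McMillan--Breiman theorem, the Birkhoff theorem applied to $g$ (together with Lemma \ref{appro} to convert $A_n g$ into $\tilde\lambda_n$), and Lemma \ref{continuity} applied to $\Psi$, to find, for every sufficiently large $N$, a collection $\mathcal G_N(\rho)\subset\Sigma_N$ satisfying
\begin{equation*}
\#\mathcal G_N(\rho)\ge e^{N(h(\rho,\sigma)-\varepsilon)},\qquad
D_N(\omega)\in\bigl[e^{-N(\lambda(\rho,\sigma)+\varepsilon)},e^{-N(\lambda(\rho,\sigma)-\varepsilon)}\bigr]\ \forall\omega\in[w],
\end{equation*}
and $|\phi_N(\omega)/N-\Psi_*(\rho)|<\varepsilon$ for every $w\in\mathcal G_N(\rho)$ and $\omega\in[w]$. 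Throwing away a few exceptional words, we may further demand that the words in $\mathcal G_N(\rho)$ do not end in $1^\infty$ or $m^\infty$, so the concatenations we build avoid the double-coded set $\Pi^{-1}\tilde\Lambda$ and land in $\tilde\Sigma$.

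\emph{Step 2: Concatenation giving divergence of the average.} Choose a very rapidly growing sequence of positive integers $N_1\ll N_2\ll\cdots$, and alternately pick, at odd stage $2k-1$, a block of length $N_{2k-1}$ from $\mathcal G_{N_{2k-1}}(\mu)$, and at even stage $2k$, a block of length $N_{2k}$ from $\mathcal G_{N_{2k}}(\nu)$. Let $F\subset\Sigma$ be the resulting compact set of all concatenations $\omega=w^{(1)}w^{(2)}w^{(3)}\cdots$. Put $S_k=N_1+\cdots+N_k$. Rapid growth ($N_k/S_{k-1}\to\infty$) together with almost additivity of $\Phi$ (controlling $\phi_{S_k}(\omega)-\sum_{j\le k}\phi_{N_j}(\sigma^{S_{j-1}}\omega)$ by a $O(k)$ error) and Lemma \ref{variation} guarantee
\begin{equation*}
\tfrac{1}{S_{2k-1}}\phi_{S_{2k-1}}(\omega)\xrightarrow[k\to\infty]{}\Psi_*(\mu),\qquad
\tfrac{1}{S_{2k}}\phi_{S_{2k}}(\omega)\xrightarrow[k\to\infty]{}\Psi_*(\nu),
\end{equation*}
for every $\omega\in F$. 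Since $\Psi_*(\mu)\neq\Psi_*(\nu)$ and $\Psi=\Phi\circ\Pi$, we conclude $\Pi(F)\subset\Lambda_{irr}$.

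\emph{Step 3: Hausdorff dimension via a mass distribution.} Place on $F$ the natural Bernoulli-type measure $\eta$ that gives each level-$k$ cylinder $[w^{(1)}\cdots w^{(k)}]\cap F$ mass $\prod_{j=1}^k 1/\#\mathcal G_{N_j}(\rho_j)$, where $\rho_j\in\{\mu,\nu\}$ alternates. For a point $x=\Pi(\omega)\in\Pi(F)$ and a scale $r>0$, find the unique $n$ with $D_{n+1}(\omega)<r\le D_n(\omega)$; by Lemma \ref{appro} the number of level-$k$ cylinders of $F$ meeting a ball of radius $r$ around $x$ is bounded (using bounded distortion of the inverse branches and the quantitative control of Step 1), so a standard computation gives
\begin{equation*}
\eta(B(x,r))\le C\,r^{\,s(n)},\qquad
s(n)=\frac{\sum_{j:S_j\le n}N_j(h(\rho_j,\sigma)-\varepsilon)}{\sum_{j:S_j\le n}N_j(\lambda(\rho_j,\sigma)+\varepsilon)}+o(1).
\end{equation*}
Because each summand in the numerator/denominator of $s(n)$ is a convex combination of the two ratios, one obtains
\begin{equation*}
\liminf_{r\to 0}\frac{\log\eta(B(x,r))}{\log r}\ge\min\Bigl\{\tfrac{h(\mu,\sigma)-\varepsilon}{\lambda(\mu,\sigma)+\varepsilon},\tfrac{h(\nu,\sigma)-\varepsilon}{\lambda(\nu,\sigma)+\varepsilon}\Bigr\},
\end{equation*}
uniformly on $\Pi(F)$. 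The mass distribution principle then yields the desired dimension bound, and letting $\varepsilon\to 0$ finishes the proof.

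\emph{Main obstacle.} The delicate point is Step 3: one must ensure that \emph{at every} scale $r$ (not only at the block boundaries $S_k$) the ball-measure ratio is close to one of the two target ratios, never substantially below their minimum. This is why the block lengths must grow fast enough that the ``current'' block dominates the cumulative sum, so that the mixed ratio at intermediate $n$ is pinned near the ratio of the currently active measure; the almost-additive error terms and the non-uniform distortion (controlled via Lemma \ref{appro} and the hyperbolicity on $\tilde\Sigma$) must be shown to be negligible on the scales considered.
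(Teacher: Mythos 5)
Your overall strategy --- a Moran-type subset of $\Sigma$ built by alternating blocks that are typical for $\mu$ and for $\nu$, so that $\Psi_n/n$ oscillates between $\Psi_*(\mu)$ and $\Psi_*(\nu)$, followed by a mass distribution estimate on the projected set --- is exactly the strategy of the paper's proof. Steps 1 and 2 are essentially the paper's construction (the paper gets the ``good words'' from Lemma \ref{basic}, Egorov's theorem applied to \eqref{block}, and the removal of the words $j^{l_i}$ to dodge $\Pi^{-1}\tilde\Lambda$, just as you propose).

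However, Step 3 has a genuine gap, and it is precisely at the point you flag as the ``main obstacle.'' Your construction uses, at stage $k$, a \emph{single} block of rapidly growing length $N_k\gg S_{k-1}$, and a measure $\eta$ that is uniform on the good words of each full block. Take $r$ with $D_{n+1}(\omega)<r\le D_n(\omega)$ for a time $n=S_{k-1}+t$ in the interior of the $k$-th block. Then $-\log r\approx\sum_{j<k}N_j\lambda(\rho_j,\sigma)+t\,\lambda(\rho_k,\sigma)$, but with your weighting the best available bound on the cylinder mass is $\eta([\omega|_n])\le\eta([\omega|_{S_{k-1}}])\approx\exp\bigl(-\sum_{j<k}N_jh(\rho_j,\sigma)\bigr)$: the uniform measure on whole good words gives no decay as $n$ advances through the block unless you can count how many good words of length $N_k$ share a given prefix of length $t$, and the crude count via Shannon--McMillan--Breiman loses a factor $e^{2\varepsilon N_k}$ that is not negligible when $t\ll N_k$. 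Consequently the ratio $\log\eta(B(x,r))/\log r$ can collapse toward $0$ at these intermediate scales, and your proposed remedy --- making $N_k$ grow even faster so that ``the current block dominates'' --- makes the denominator larger while leaving the numerator fixed, i.e.\ it worsens the problem. The paper avoids this in two ways at once: at stage $i$ it concatenates $N_i$ \emph{independent short} blocks of the fixed length $l_i$ (so the incomplete final block at any time $n$ has length $l^*_{J(n)+1}$, which by \eqref{l-i-basic1} is a vanishing fraction of $n$ and can simply be discarded), and it weights each block $w$ by the normalized ergodic mass $\eta_i^*[w]/\eta_i^*(\Omega^*(i))$ rather than uniformly, so the Egorov-uniform estimates \eqref{estimation2}--\eqref{estimation3} control the mass of every cylinder that actually occurs. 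To repair your argument you would need either to adopt this many-short-blocks structure, or to replace the uniform weights by the restricted ergodic measure together with uniform SMB control at \emph{all} intermediate times $t\le N_k$; as written, the key display $\eta(B(x,r))\le C\,r^{s(n)}$ is not justified.
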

\noindent {\bf Proof of Theorem \ref{main-4}.}
Under the assumption of Theorem \ref{main-3}, $$\dim_{H}\Lambda={\sup\limits_{\mu\in\mathcal{M}(\Sigma,\sigma)}}\left\{\frac{h(\mu,\sigma)}{\lambda(\mu,\sigma)}:\lambda(\mu,\sigma)>0\right\}.$$

For any $\epsilon>0$, there exists $\mu\in \mathcal{M}(\sigma,\Sigma)$ such that
$\frac{h(\mu,\sigma)}{\lambda(\mu,\sigma)}\geq \dim_{H}\Lambda-\epsilon$. Write $\alpha=\Psi_{*}(\mu)$. Since $\sharp \mathcal{L}_{\Psi}\geq 2$, we can choose $\nu\in \mathcal{M}(\sigma,\Sigma)$ such that $\Psi_{*}(\nu)=\beta\neq \alpha$.

Define $\nu_{s}=s\mu+(1-s)\nu$, where $s\in [0,1]$. We have $\Psi_{*}(\mu_s)=s\alpha+(1-s)\beta\neq \alpha$ for any $s\in [0,1)$. By Lemma \ref{deviation},
\[\dim_{H}\Lambda_{irr}\geq \min\left\{\frac{h(\mu,\sigma)}{\lambda(\mu,\sigma)}, \frac{h(\mu_s,\sigma)}{\lambda(\mu_s,\sigma)}\right\}=\min\left\{\frac{h(\mu,\sigma)}{\lambda(\mu,\sigma)}, \frac{sh(\mu,\sigma)+(1-s)h(\nu,\sigma)}{s\lambda(\mu,\sigma)+(1-s)\lambda(\nu,\sigma)}\right\}\]
for all $s\in [0,1)$.
Taking $s$ goes to $1$, we get $\dim_{H}\Lambda_{irr}\geq \frac{h(\mu,\sigma)}{\lambda(\mu,\sigma)} \geq \dim_{H}\Lambda-\epsilon$.
By the arbitrary of $\epsilon$, we get the desired result.

\noindent {\bf Proof of Lemma \ref{deviation}.}
 By Lemma \ref{variation} and Lemma \ref{appro}, we can choose a decreasing sequence $\epsilon_{i}\downarrow 0$   such that for all $n\geq 2i-1$,
\begin{equation}\label{var2}
\frac{1}{n} ||\Psi||_{n}<\epsilon_{2i-1},\ \ \ \var_n A_n g<\epsilon_{2i-1}\ \ \text{ and }\ \  \ |\tilde{\lambda}_n(\omega)-A_n g(\omega)|<\epsilon_{2i-1} (\forall \omega\in\Sigma).
\end{equation}
By Lemma \ref{basic} and Lemma \ref{continuity}, we can choose a sequence of  $\mu_{2i-1}\in\mathcal{E}(\Sigma,\sigma)$, such that
\begin{equation}\label{control1}
|\Psi_{*}(\mu_{2i-1})-\alpha|<\epsilon_{2i-1},\
|h(\mu_{2i-1},\sigma)-h(\mu,\sigma)|<\epsilon_{2i-1}\ \text{ and }\
|\lambda(\mu_{2i-1},\sigma)-\lambda(\mu,\sigma)|<\epsilon_{2i-1}.
\end{equation}
Since $\mu_{2i-1}$ is ergodic,  for $\mu_{2i-1}$ a.e. $\omega$,
\begin{equation}\label{block}
\frac{1}{n} \Psi_{n}(\omega)\to \Psi_{*}(\mu_{2i-1}),  \
A_n g(\omega)\to \lambda(\mu_{2i-1},\sigma)  \text{ and }
-\frac{\log \mu_{2i-1}[\omega|_n]}{n}\to h(\mu_{2i-1},\sigma).
\end{equation}

Fix $\delta>0$. Since $\mu_{2i-1}$ is continuous as we remarked after Lemma \ref{basic}, there exists $\ell_{2i-1}\geq 2i-1$ such that
$\mu_{2i-1}(\bigcup_{j=1}^m [j^{\ell_{2i-1}}])\le \delta/2.$
By  Egorov's theorem,  there exists $\Omega'(2i-1)\subset \Sigma$ such that $\mu_{2i-1}(\Omega'(2i-1))>1-\delta/2$ and \eqref{block}
 holds uniformly on $\Omega'(2i-1)$. Then there  exists   $l_{2i-1}\geq \ell_{2i-1}\geq 2i-1$ such that  for all  $n\geq l_{2i-1}$  and $\omega\in\Omega'(2i-1)$, we
have
\begin{equation}\label{estimation2}
\begin{cases}
|\frac{1}{n} \Psi_{n}(\omega)- \Psi_{*}(\mu_{2i-1})|<\epsilon_{2i-1} \\
|A_n g(\omega)- \lambda(\mu_{2i-1},\sigma)|< \epsilon_{2i-1} \\
|-{\log\mu_{2i-1}[\omega|_n]}/{n} - h(\mu_{2i-1},\sigma)|< \epsilon_{2i-1}
\end{cases}
\end{equation}
Let
$$
\Sigma(2i-1)=\{\omega|_{l_{2i-1}}\ |\ \omega\in \Omega'(2i-1)\}\setminus\{1^{l_{2i-1}},\cdots, m^{l_{2i-1}}\}.
$$
Let ${\Omega}(2i-1)=\bigcup_{w\in \Sigma(2i-1)}[w]$. Then
$$
\mu_{2i-1}({\Omega}(2i-1))\ge \mu_{2i-1}(\Omega^\prime(2i-1))-\mu_{2i-1}(\bigcup_{j=1}^m [j^{l_{2i-1}}]) \ge 1-\delta/2 -\delta/2=1-\delta.
$$
Similarly for all $n\geq 2i$, we have
\begin{equation}\label{var3}
\frac{1}{n}||\Psi||_{n}<\epsilon_{2i},\ \ \ \var_n A_n g<\epsilon_{2i}\ \ \text{ and }\ \  \ |\tilde{\lambda}_n(\omega)-A_n g(\omega)|<\epsilon_{2i} (\forall \omega\in\Sigma).
\end{equation}
By Lemma \ref{basic} we can pick a sequence of  $\nu_{2i}\in\mathcal{E}(\Sigma,\sigma)$, such that
\begin{equation}\label{control2}
|\Psi_{*}(\nu_{2i})-\alpha|<\epsilon_{2i},\
|h(\nu_{2i},\sigma)-h(\nu,\sigma)|<\epsilon_{2i}\ \text{ and }\
|\lambda(\nu_{2i},\sigma)-\lambda(\nu,\sigma)|<\epsilon_{2i}.
\end{equation}
Since $\nu_{2i}$ is ergodic,  for $\nu_{2i}$ a.e. $\omega$,
\begin{equation}\label{block}
\frac{1}{n} \Psi_{n}(\omega)\to \Psi_{*}(\nu_{2i}),  \
A_n g(\omega)\to \lambda(\nu_{2i},\sigma)  \text{ and }
-\frac{\log \nu_{2i}[\omega|_n]}{n}\to h(\mu_{2i},\sigma).
\end{equation}

Similarly for all $n\ge 2i$, we have
\begin{equation}\label{estimation3}
\begin{cases}
|\frac{1}{n} \Psi_{n}(\omega)- \Psi_{*}(\nu_{2i})|<\epsilon_{2i} \\
|A_n g(\omega)- \lambda(\nu_{2i},\sigma)|< \epsilon_{2i-1} \\
|-{\log\nu_{2i}[\omega|_n]}/{n} - h(\nu_{2i},\sigma)|< \epsilon_{2i}
\end{cases}
\end{equation}
Let
$$
\Sigma(2i)=\{\omega|_{l_{2i}}\ |\ \omega\in \Omega'(2i)\}\setminus\{1^{l_{2i}},\cdots, m^{l_{2i}}\}.
$$
Let ${\Omega}(2i)=\bigcup_{w\in \Sigma(2i)}[w]$. Then
$$
\nu_{2i}({\Omega}(2i))\ge \nu_{2i}(\Omega^\prime(i))-\nu_{2i}(\bigcup_{j=1}^m [j^{l_{2i}}]) \ge 1-\delta/2 -\delta/2=1-\delta.
$$

It is seen that we can take $l_i$ such that  $l_i\uparrow \infty$ and still satisfies all the above property.
Let $N_{0}=1$, $N_i=2^{l_{i+2}+N_{i-1}}$, $i\geq 1$. Let
$$
M=\overset{\infty}{\underset{i=1}{\prod}}\overset{N_i}{\underset{j=1}{\prod}}\Sigma(i).
$$
By the definition of $\Sigma(i)$ and \eqref{coding}, it is ready to see that
$M\cap \Pi^{-1}\tilde \Lambda =\emptyset.$ In the following we will  show that $\Pi M\subset \Lambda_{irr}.$ To be precise, we will check the following result:
\begin{lemma}\label{limits}
Let $n_{j}=\sum\limits_{i=1}^{j}l_{i}N_{i}$ and fix $\omega\in M$, then we have
\item $\lim\limits_{j\rightarrow\infty}\frac{\Psi_{n_{2j+1}}(\omega)}{n_{2j+1}}=\alpha$,\\
\item $\lim\limits_{j\rightarrow\infty}\frac{\Psi_{n_{2j}}(\omega)}{n_{2j}}=\beta$.
\end{lemma}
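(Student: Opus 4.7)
\textbf{Plan for Lemma \ref{limits}.} The two limits are symmetric, so I outline only the odd case $\Psi_{n_{2j+1}}(\omega)/n_{2j+1}\to\alpha$; the even case is identical after swapping $(\mu,\alpha,\Sigma(2i-1),\mu_{2i-1},\Omega'(2i-1))$ with $(\nu,\beta,\Sigma(2i),\nu_{2i},\Omega'(2i))$ and using \eqref{var3}--\eqref{estimation3} in place of \eqref{var2}--\eqref{estimation2}. The guiding idea is that by construction of $M$, the segment between times $n_{2j}$ and $n_{2j+1}$ consists of exactly $N_{2j+1}$ words drawn from $\Sigma(2j+1)$, each of length $l_{2j+1}$; I will show this final super-block both \emph{dominates} the Birkhoff-type sum and has \emph{average value} close to $\Psi_{*}(\mu_{2j+1})\approx\alpha$.

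\emph{Step 1: Peel off the last super-block.} Since $n_{2j+1}=n_{2j}+l_{2j+1}N_{2j+1}$, almost additivity of $\Psi$ yields
$$\Psi_{n_{2j+1}}(\omega)=\Psi_{n_{2j}}(\omega)+\Psi_{l_{2j+1}N_{2j+1}}(\sigma^{n_{2j}}\omega)+O(C(\Psi)).$$
By iterating almost additivity, $|\Psi_{n_{2j}}(\omega)|=O(n_{2j})$, so the prefix contributes $O(n_{2j}/n_{2j+1})$ after normalizing. The tower-exponential recursion $N_i=2^{l_{i+2}+N_{i-1}}$ gives $l_iN_i/n_{i-1}\to\infty$, hence $n_{2j}/n_{2j+1}\to 0$, so this term vanishes.

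\emph{Step 2: Break the super-block into words, then compare with a reference point.} Iterating almost additivity $N_{2j+1}-1$ times inside the super-block gives
$$\Psi_{l_{2j+1}N_{2j+1}}(\sigma^{n_{2j}}\omega)=\sum_{k=0}^{N_{2j+1}-1}\Psi_{l_{2j+1}}\bigl(\sigma^{n_{2j}+kl_{2j+1}}\omega\bigr)+O\bigl(N_{2j+1}C(\Psi)\bigr),$$
and the error, divided by $n_{2j+1}\ge l_{2j+1}N_{2j+1}$, is $O(1/l_{2j+1})\to 0$. For each $k$, the first $l_{2j+1}$ coordinates of $\sigma^{n_{2j}+kl_{2j+1}}\omega$ lie in $\Sigma(2j+1)$ (by the definition of $M$), hence coincide with $\omega^{(k)}|_{l_{2j+1}}$ for some $\omega^{(k)}\in\Omega'(2j+1)$. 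Combining the variation bound $\|\Psi\|_{l_{2j+1}}/l_{2j+1}<\epsilon_{2j+1}$ from \eqref{var2} with the Birkhoff estimate in \eqref{estimation2} applied at $\omega^{(k)}$ gives
$$\left|\frac{\Psi_{l_{2j+1}}(\sigma^{n_{2j}+kl_{2j+1}}\omega)}{l_{2j+1}}-\Psi_{*}(\mu_{2j+1})\right|<2\epsilon_{2j+1}.$$

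\emph{Step 3: Assemble and pass to the limit.} Averaging the previous inequality over $k$ and folding in Steps 1--2,
$$\left|\frac{\Psi_{n_{2j+1}}(\omega)}{n_{2j+1}}-\Psi_{*}(\mu_{2j+1})\right|<2\epsilon_{2j+1}+o_{j}(1).$$
By \eqref{control1}, $|\Psi_{*}(\mu_{2j+1})-\alpha|<\epsilon_{2j+1}$, and letting $j\to\infty$ with $\epsilon_i\downarrow 0$ produces the first claim. The second claim is the even-indexed mirror, with the reference measure being $\nu$ instead of $\mu$.

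\textbf{Main obstacle.} No single estimate is deep; the delicate point is orchestrating three independent sources of error simultaneously: the almost-additive slack $C(\Psi)$ accumulated once per block-join in Step 2, the $n$-th variation $\|\Psi\|_n$ used in Step 2 to pass from $\omega$ to the reference $\omega^{(k)}$, and the uniform Birkhoff deviation on $\Omega'(2j+1)$ from \eqref{estimation2}. The tower-exponential choice $N_i=2^{l_{i+2}+N_{i-1}}$ is engineered precisely so that $l_{2j+1}N_{2j+1}$ simultaneously dominates the prefix $n_{2j}$ \emph{and} the aggregated join error $N_{2j+1}C(\Psi)$, while $l_{2j+1}\to\infty$ forces the variation term to vanish; together these let the average over the final super-block dictate the limit. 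The fact that $M\cap\Pi^{-1}\tilde\Lambda=\emptyset$ (already noted in the text from \eqref{coding} and the exclusion of the words $j^{l_i}$) is what permits lifting the conclusion from $M\subset\Sigma$ to $\Pi M\subset\Lambda_{irr}$.
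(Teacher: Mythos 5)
Your argument is correct, and it reaches the same conclusion with the same toolbox (almost additivity to cut the orbit into words of length $l_i$, the variation bound in \eqref{var2} to replace each word by a reference point of $\Omega'(2i-1)$, and the uniform Egorov/ergodic estimates \eqref{estimation2}), but the decomposition is genuinely different from the paper's. The paper expands $\Psi_{n_{2j+1}}(\omega)-n_{2j+1}\alpha$ over \emph{all} $2j+1$ super-blocks, applies the fine estimates to every word, and closes the argument by observing that the odd-indexed words each average to within $O(\epsilon_{2i-1})$ of $\alpha$ while the even-indexed words, which average near $\beta$, have total length $\sum_{i\le j}l_{2i}N_{2i}\le n_{2j}=o(n_{2j+1})$. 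You instead discard the entire prefix of length $n_{2j}$ by the trivial uniform bound $|\Psi_{n_{2j}}(\omega)|\le n_{2j}(\|\psi_1\|+C(\Psi))$ and apply the fine estimates only inside the final super-block, whose relative length tends to $1$. Both proofs hinge on exactly the same structural fact, namely that $N_{2j+1}=2^{l_{2j+3}+N_{2j}}$ forces $l_{2j+1}N_{2j+1}/n_{2j}\to\infty$, and your per-block error budget ($2\epsilon_{2j+1}$ from variation plus the ergodic estimate, $C(\Psi)/l_{2j+1}$ from the join errors, and the extra $\epsilon_{2j+1}$ from \eqref{control1}) matches the paper's $3l_i\epsilon_i+C$ bookkeeping. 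Your route is a little more economical for this lemma, since the estimates on the earlier super-blocks are not needed here; the paper's version records that \emph{every} odd super-block has the right average, which is essentially the information reused later in the lower bound for $\Pi_*\eta(B(x,r))$.
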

\noindent{\bf Proof of Lemma\ref{limits}}

\begin{align*}
&\Psi_{n_{2j+1}} (\omega)-n_{2j+1}\alpha\\
\leq &\underset{i=1}{\overset{2j+1}{\sum}}\underset{k=0}{\overset{N_i-1}{\sum}}[\Psi_{l_i}
(\sigma^{n_{i-1}+kl_{i}}\omega)-l_i\alpha+C] \\
=&\underset{i=1}{\overset{j+1}{\sum}}\underset{k=0}{\overset{N_{2i-1}-1}{\sum}}[\Psi_{l_{2i-1}}
(\sigma^{n_{2i-2}kl_{2i-1}}\omega)-l_{2i-1}\alpha+C]+\underset{i=1}{\overset{j}{\sum}}\underset{k=0}{\overset{N_{2i}-1}{\sum}}[\Psi_{l_{2i}}
(\sigma^{n_{2i-1}kl_{2i}}\omega)-l_{2i}\alpha+C] \\
\leq & \underset{i=1}{\overset{j+1}{\sum}}3l_{2i-1}N_{2i-1}\epsilon_{2i-1}\vec{1}
+\underset{i=1}{\overset{j}{\sum}}[3l_{2i}N_{2i}\epsilon_{2i}\vec{1}+l_{2i}N_{2i}(\beta-\alpha)]+\sum\limits_{i=1}^{2j+1}N_iC\\
=&\underset{i=1}{\overset{2j+1}{\sum}}N_i(3l_{i}\epsilon_{i}\vec{1}+C)+\underset{i=1}{\overset{j}{\sum}}l_{2i}N_{2i}(\beta-\alpha).
\end{align*}
where for the second inequality we use (\ref{var2}) (\ref{control1}) (\ref{estimation2}) (\ref{var3}) (\ref{control2})
(\ref{estimation3})and similar method used in the proof of lower bound of Theorem 1.
Similarly we have
$$\Psi_{n_{2j+1}} (\omega)-n_{2j+1}\alpha\ge -\underset{i=1}{\overset{2j+1}{\sum}}N_i(3l_{i}\epsilon_{i}\vec{1}+C)+\underset{i=1}{\overset{j}{\sum}}l_{2i}N_{2i}(\beta-\alpha).$$
Noting that
\[\lim_{j\rightarrow\infty}\frac{l_{2}N_{2}+l_{4}N_{4}+\cdots +l_{2j}N_{2j}}{l_{1}N_{1}+l_{2}N_{2}+\cdots +l_{2j+1}N_{2j+1}}=0,\]
we have
$$\lim\limits_{j\rightarrow\infty}\frac{\Psi_{n_{2j-1}}(\omega)}{n_{2j-1}}=\alpha.$$ Similarly we can also get
$$\lim\limits_{j\rightarrow\infty}\frac{\Psi_{n_{2j}}(\omega)}{n_{2j}}=\beta.$$
This implies that $\Pi M\subset \Lambda_{irr}$.

Now we will construct a measure $\eta$ supported on $M$ and show that for all $x\in \Pi(M)$
$$
\underset{r\downarrow0}{\liminf}\frac{\log \Pi_*\eta(B(x,r))}{\log r}\geq
 \min\left\{\frac{h(\mu,\sigma)}{\lambda(\mu,\sigma)}, \frac{h(\nu,\sigma)}{\lambda(\nu,\sigma)}\right\}.
 $$
Consequently, we have
\[\dim_{H}\Lambda_{irr}\geq \dim_{H}\Pi M \ge\min\left\{\frac{h(\mu,\sigma)}{\lambda(\mu,\sigma)}, \frac{h(\nu,\sigma)}{\lambda(\nu,\sigma)}\right\}.\]
Then the result follows.\hfill

For convenience we relabel the following sequence
$$
\underbrace{l_1\cdots l_1,}_{N_1}\cdots,\underbrace{l_i\cdots l_i,}_{N_i}\cdots
$$
 as $\{l^*_i:i\ge 1\}$.
Relabel the following sequence
$$
\underbrace{\Sigma(1)\cdots \Sigma(1),}_{N_1}\cdots,\underbrace{\Sigma(i)\cdots
\Sigma(i),}_{N_i}\cdots
$$
 as $\{\Sigma^{*}(i): i\ge 1\}$.
Accordingly  we get $\{{\Omega}'^*(i)\}$, $\{{\Omega}^*(i)\}$, $\{\nu^*_i)\}$, $\{\epsilon^*_i\}$. Let
$n_k=\underset{i=1}{\overset{k}{\sum}}l_i^*$. For any $n>0$, there  exists $J(n)\in\N$ such that
$\underset{i=1}{\overset{J(n)}{\sum}}l_i^*\leq n<\underset{i=1}{\overset{J(n)+1}{\sum}}l_i^*$.
There also exists $r(n)\in\N$ such that $\underset{i=1}{\overset{r(n)}{\sum}}N_i\leq
J(n)<\underset{i=1}{\overset{r(n)+1}{\sum}}N_i$. It is seen that
\begin{equation}\label{J-n}
J(n)\leq J(n+1)\leq J(n)+1,\ l_{J(n)+1}^*=l_{r(n)+1}\ \text{ and }\ l_{J(n)+2}^*\leq l_{r(n)+2},
\end{equation}
then, for $j=1,2$, $$\frac{l_{J(n)+j}^*}{\underset{i=1}{\overset{J(n)}{\sum}}l_i^*}\leq
\frac{l_{r(n)+j}}{N_{r(n)}l_{r(n)}}=\frac{l_{r(n)+j}}{2^{N_{r(n)-1}+l_{r(n)+2}}l_{r(n)}}.$$
We have
\begin{equation}\label{l-i-basic1}
{\underset{i=1}{\overset{J(n)+1}{\sum}}l_i^*}/{\underset{i=1}{\overset{J(n)}{\sum}}l_i^*}\to 1\ \ \text{ and }\ \ \ {l^*_{J(n)+j}}/{\underset{i=1}{\overset{J(n)}{\sum}}l_i^*}\to 0, j=1,2.
\end{equation}

For convenience, define $\eta_{i}=\mu_{i}$ if $i$ is odd, and $\eta_{i}=\nu_{i}$, if $i$ is even.
At first  we define a probability $m$ supported on $M$. For each  $w\in \Sigma^*(i)$ define
$$
\rho^i_w=\frac{\eta_{i}^\ast[w]}{\eta_{i}^\ast(\Omega^*(i))}.
$$
It is seen that $\sum_{w\in \Sigma^*(i)}\rho^i_w=1.$
Write
$
{\mathcal C}_n:=\{[w]:w\in \prod_{i=1}^n \Sigma^\ast(i)\}.
$
It is seen that $\sigma({\mathcal C}_n: n\ge 1)$ gives the Borel-$\sigma$ algebra in $M.$
For each $w=w_1\cdots w_n\in {\mathcal C}_n$ define
$$
\hat{\eta}([w])=\prod_{i=1}^n \rho^i_{w_i}.
$$
Let $\eta$ be the Kolmogorov extension of $\hat{\eta}$ to all the Borel sets.   By the construction it is seen that $\eta$ is supported on $M.$

Fix  $\omega\in M$. At first we find a lower bound for $D_n(\omega)$. Define $n_0=0$, $n_i=\sum\limits_{j=1}^il_j^*$, for $i\geq1$.
Recall  that $D_n(\omega)=e^{-n\tilde \lambda_n(\omega)}$. By the construction of $M$ we have $\sigma^{n_{i-1}}\omega \in [w]$ for some $w\in\Sigma^*(i)$, consequently there exists $\omega^i\in \Omega^{\prime \ast}(i)\cap [w]$ such that \eqref{estimation2} \eqref{estimation3} holds. By the similar method used in Theorem \ref{main-1}, we have
\begin{align*}
&  n\tilde \lambda_n(\omega)\\
\leq& n(A_ng(\omega)+\epsilon_{J(n)}^\ast)\\
\leq&
 \underset{i=1}{\overset{J(n)}{\sum}}l_i^*(A_{l_i^*}g (\sigma^{n_{i-1}}\omega)+\epsilon_{i}^*)
 +(n-n_{J(n)})\left(A_{n-n_{J(n)}}g (\sigma^{n_{J(n)}}\omega)+\epsilon_{J(n)}^*\right)\\
 \leq&
\underset{i=1}{\overset{J(n)}{\sum}}l_i^*\big\{A_{l_i^*}g(\sigma^{n_{i-1}}\omega)-A_{l_i^*}g(\omega^i)+A_{l_i^*}g
(\omega^i)-\lambda(\eta_{i},\sigma)+\\
&\lambda(\eta_{i},\sigma)-\lambda(\eta_i,\sigma)
+\lambda(\eta_i,\sigma)+\epsilon_{i}^*\big\}
+l_{J(n)+1}^*(||g||+\epsilon_{J(n)}^*)\\
 \leq &\underset{i=1}{\overset{J(n)}{\sum}}l_i^*(\lambda(\eta_i,\sigma)+
4\epsilon_{i}^*)+
l_{J(n)+1}^*(||g||+\epsilon_{J(n)}^*)=:\rho(n).
\end{align*}
Then
$D_n(\omega)\geq e^{-\rho(n)}.$ It is seen that $\rho(n)$ is increasing.

Now fix $x\in \Pi(M)$ and some $r>0$ small. Then there exists a unique $n=n_r$ such that
\begin{equation}\label{r}
e^{-\rho(n+1)}\le r< e^{-\rho(n)}.
\end{equation}
Consider the set of $n$-cylinders
$$
{\mathcal C}:=\{I_n(\omega): \omega\in M \text{ and } I_n(\omega)\cap B(x,r)\ne \emptyset\}.
$$
By the bound $D_n(\omega)\ge e^{-\rho(n)}$, the above  set consists of at most three cylinders, i.e. $\#{\mathcal C}\le 3$.

Choose $\omega\in M$ such that  $I_n(\omega)\in {\mathcal C}$. Write $\omega|_n=w_1\cdots w_{J(n)} v$, then $w_i\in \Sigma^*(i)$ and $v$ is a prefix of some $\tilde v\in \Sigma^*(J(n)+1)$.
Then
\begin{align*}
 \Pi_*\eta(I_n(\omega))=\nu[\omega|_n]
=&\underset{i=1}{\overset{J(n)}{\prod}}\frac{\eta_{i}^*[w_i]}{\eta_{i}^*(\Omega^*(i))}\cdot\frac{\eta_{J(n)+1}^*[v]}{\eta_{J(n)+1}^*(\Omega^*(J(n)+1))}\\
& \leq
(1-\delta)^{-J(n)-1}\underset{i=1}{\overset{J(n)}{\prod}}\eta_{i}^*[w_i].
\end{align*}
Then we conclude that $\Pi_*\eta(B(x,r))\le 3(1-\delta)^{-J(n)-1}\underset{i=1}{\overset{J(n)}{\prod}}\eta_i^*[w_i].$ For convenience, we define $\tau_i$ be the measure which is $\mu$ whenever $i$ is odd is $\nu$ whenever $i$ even. Consequently
\begin{align*}
&\quad \log \Pi_*\eta(B(x,r))\\
&\leq
-\underset{i=1}{\overset{J(n)}{\sum}}l_i^*\left(-\frac{\log\eta_{i}^*[w_i]}{l_i^*} \right)
-(J(n)+1)\log(1-\delta)+\log 3\\
&\leq
-\underset{i=1}{\overset{J(n)}{\sum}}l_i^*(h(\tau_i,\sigma)-2\epsilon_{i}^*)-(J(n)+1)\log(1-\delta)+\log 3,
\end{align*}
where for the second inequality  we use \eqref{control2} and \eqref{estimation3}.
Notice  that $r\rightarrow 0$ if and only if $n\rightarrow\infty$. By \eqref{J-n} we have $J(n+1)\le J(n)+1.$ Together with \eqref{r} and \eqref{l-i-basic1} we get
\begin{align*}
& \quad \underset{r\downarrow 0}{\liminf}\frac{\log\Pi_* \eta(B(x,r))}{\log r} \\
& \geq
\underset{n\rightarrow\infty}{\liminf}
\frac{\underset{i=1}{\overset{J(n)}{\sum}}l_i^*(h(\tau_i,\sigma)-2\epsilon_{i}^*)+(J(n)+1)\log(1-\delta)-\log 3}
{\underset{i=1}{\overset{J(n+1)}{\sum}}l_i^*(\lambda(\tau_i,\sigma)+4\epsilon_{i}^*)
+l_{J(n+1)+1}^*(||g||+\epsilon_{J(n+1)+1}^*)}\\
& =\liminf\limits_{n\rightarrow\infty}
\frac{\sum\limits_{i=1}^{J(n)}l_i^*(h(\tau_i,\sigma)-2\epsilon_i^*)}{\sum\limits_{i=1}^{J(n)}l_i^*(\lambda(\tau_i,\sigma)+4\epsilon_i^*)}\\
& \ge \lim\limits_{n\rightarrow\infty}
\frac{\sum\limits_{i=1}^{J(n)}l_i^*(\lambda(\tau_i,\sigma)+4\epsilon_i^*)
\min\{\frac{h(\mu,\sigma)-2\epsilon_i^*}{\lambda(\mu,\sigma)+4\epsilon_i^*},\frac{h(\nu,\sigma)-2\epsilon_i^*}{\lambda(\nu,\sigma)+4\epsilon_i^*}\}}
{\sum\limits_{i=1}^{J(n)}l_i^*(\lambda(\tau_i,\sigma)+4\epsilon_i^*)}\\
& =\lim\limits_n \min\left\{\frac{h(\mu,\sigma)-2\epsilon_J(n)^*}{\lambda(\mu,\sigma)+4\epsilon_J(n)^*},\frac{h(\nu,\sigma)-2\epsilon_J(n)^*}
{\lambda(\nu,\sigma)+4\epsilon_J(n)^*}\right\}\\
& =\min\left\{\frac{h(\mu,\sigma)}{\lambda(\mu,\sigma)},\frac{h(\nu,\sigma)}{\lambda(\nu,\sigma)}\right\}
\end{align*}

 $\Box$

\section{Acknowledgement}
We are very grateful that Yanhui Qu's great help during the preparations of this manuscript and especially for his patient discussions on the techniques of  constructions of Moran set.


\begin{thebibliography}{10}

\bibitem{BD2009} Barreira, L., \& Doutor, P. (2009). Almost additive multifractal analysis. Journal de math¨¦matiques pures et appliqu¨¦es, 92(1), 1-17.
\bibitem{BQ} J.Barral and Yan-Hui Qu. (2012) Localized asymptotic behavior for almost additive potentials. {\it Discrete Contin. Dyn. Syst.} 32 , no. 3, 717-751.
\bibitem{BS2000} Barreira, L., \& Schmeling, J. (2000). Sets of ¡°non-typical¡± points have full topological entropy and full Hausdorff dimension. Israel Journal of Mathematics, 116(1), 29-70.

\bibitem{FFW} Ai-Hua.Fan, De-Jun.Feng and  Jun.Wu. (2001) Recurrence, dimension and entropy. {\it  J. London Math. Soc. } (2) 64 , no. 1, 229-244.

\bibitem{FLW2002} De-Jun.Feng, Ka-sing.Lau \& Jun.Wu. (2002) Ergodic limits on the conformal Repellers.  {\it Advances in Mathematics}  169, 58-91.
\bibitem{GP1997}  Gatzouras, D., \& Peres, Y. (1997). Invariant measures of full dimension for some expanding maps. Ergodic theory and dynamical systems, 17(1), 147-167.
\bibitem{GR2009} Gelfert.K \& Rams.M. (2009) The Lyapunov spectrum of some parabolic systems. {\it Ergodic Theory and Dynamical Systems}, 29(3): 919-940.
\bibitem{GR2009} Gelfert.K \& Rams.M. (2009). Geometry of limit sets for expansive Markov systems.
Trans. Amer. Math. Soc. 361 , 2001-2020.
\bibitem{MaYao2013} Guan-Zhong. Ma, Xiao, Yao. (2013) Higher dimensional multifractal analysis of non-uniformly hyperbolic systems.
http://arxiv-web3.library.cornell.edu/abs/1311.5083
\bibitem{JJOP2010} A.Johansson, \& T. M. Jordan,   A. \"{O}berg and M. Pollicott. (2010). Multifractal analysis of non-uniformly hyperbolic systems. \textit{Israel Journal of Mathematics},177(1), 125-144.

\bibitem{Olsen} Olsen, L. (2003). Multifractal analysis of divergence points of deformed measure theoretical Birkhoff averages. Journal de math¨¦matiques pures et appliqu¨¦es, 82(12), 1591-1649.
\bibitem{PW1997A} Pesin, Y., \& Weiss, H. (1997). A multifractal analysis of equilibrium measures for conformal expanding maps and Moran-like geometric constructions. Journal of Statistical Physics, 86(1-2), 233-275.
\bibitem{PW1997B} Pesin, Y., \& Weiss, H. (1997). The multifractal analysis of Gibbs measures: motivation, mathematical foundation, and examples. Chaos: An Interdisciplinary Journal of Nonlinear Science, 7(1), 89-106.
\bibitem{PW1999}  Pollicott, M., \& Weiss, H. (1999). Multifractal analysis of Lyapunov exponent for continued fraction and Manneville¨CPomeau transformations and applications to Diophantine approximation. Communications in mathematical physics, 207(1), 145-171.

\bibitem{Thompson2010} Thompson, D. (2010). The irregular set for maps with the specification property has full topological pressure. Dynamical Systems, 25(1), 25-51.
\bibitem{Urbanski1996}  M.Urbanski. (1996). Parabolic Cantor sets.  {\it Fundamenta Mathematicae,} 151(3),  241-277.



\end{thebibliography}
\end{document}